\newtheorem{theorem}{Theorem}[section]
\newtheorem{lemma}{Lemma}[section]
\newtheorem{proposition}{Proposition}[section]
\theoremstyle{definition}
\newtheorem{definition}{Definition}[section]
\newtheorem{conjecture}{Conjecture}[section]
\theoremstyle{remark}
\newtheorem{remark}{Remark}[section]
\numberwithin{equation}{section}
\newcommand{\Mod}[1]{\ (\mathrm{mod}\ #1)}
\renewcommand{\Re}{\mathrm{Re}}
\renewcommand{\leq}{\leqslant}
\renewcommand{\geq}{\geqslant}
\begin{document}

\title[Non-vanishing of Dirichlet $L$-functions with smooth conductors]{Non-vanishing of Dirichlet $L$-functions with smooth conductors}


\author{}
\address{}
\curraddr{}
\email{}
\thanks{}

\author{Sun-Kai Leung}
\address{D\'epartement de math\'ematiques et de statistique\\
Universit\'e de Montr\'eal\\
CP 6128 succ. Centre-Ville\\
Montr\'eal, QC H3C 3J7\\
Canada}
\curraddr{}
\email{sun.kai.leung@umontreal.ca}
\thanks{}

\subjclass[2020]{11L05; 11L07; 11M20}

\date{}

\dedicatory{}

\keywords{}

\begin{abstract}
 Given a large, square-free, smooth conductor, we establish the non-vanishing of the central values for at least $35.9\%$ of the primitive Dirichlet $L$-functions. 
\end{abstract}

\maketitle


\section{Introduction}

It is widely believed that all primitive Dirichlet $L$-functions do not vanish at the central point, i.e., for any primitive Dirichlet character $\chi,$ we have $L(\frac{1}{2},\chi) \neq 0.$ This was first conjectured by Chowla \cite{MR177943} for primitive real characters. However, establishing such non-vanishing for all characters appears to be out of reach at the moment.

Given a large, square-free, smooth conductor, we prove in this paper that more than $35.9\%$ of the primitive Dirichlet $L$-functions $L(s,\chi)$ do not vanish at the central point $s=\frac{1}{2}$. In particular, this is the first instance where the proportion surpasses 
$35\%$ for some explicit, positive fraction of moduli.

\begin{theorem} \label{thm:main}
There exist positive real numbers $Q, \eta$ such that for any square-free integer $q>Q$ which is $q^{\eta}$-smooth, i.e., without prime factors exceeding $q^{\eta}$, we have $L(\frac{1}{2},\chi) \neq 0$ for at least $35.9\%$ of the primitive characters modulo $q.$
\end{theorem}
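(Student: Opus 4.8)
The plan is to run the mollification method of Iwaniec and Sarnak, but with a mollifier of length $q^{\theta}$ for some $\theta$ slightly larger than $\tfrac12$, the extra length being afforded by the $q^{\eta}$-smoothness of $q$. Write $\sum^{\ast}$ for a sum over the primitive characters modulo $q$ and $\varphi^{\ast}(q)=\sum_{\chi\bmod q}^{\ast}1$. Fix such a $\theta$ and a mollifier
\[
\psi(\chi)=\sum_{\ell\leq q^{\theta}}\frac{a_{\ell}\,\chi(\ell)}{\sqrt{\ell}},\qquad (a_{\ell},q)=1,
\]
where one would take the two-piece shape of Bui, $\psi=\psi_{1}+\psi_{2}$, with $\psi_{1}$ carrying Möbius coefficients $\mu(\ell)P_{1}\!\bigl(\tfrac{\log(q^{\theta}/\ell)}{\theta\log q}\bigr)$ and $\psi_{2}$ a shorter divisor-type piece, since this improves the numerology for fixed $\theta$. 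By Cauchy--Schwarz,
\[
\#\bigl\{\chi\bmod q\ \text{primitive}:\ L(\tfrac12,\chi)\ne 0\bigr\}\ \geq\ \frac{|\mathcal{M}_{1}|^{2}}{\mathcal{M}_{2}},\qquad
\mathcal{M}_{1}=\sum_{\chi}^{\ast}L(\tfrac12,\chi)\,\overline{\psi(\chi)},\quad
\mathcal{M}_{2}=\sum_{\chi}^{\ast}\bigl|L(\tfrac12,\chi)\,\psi(\chi)\bigr|^{2},
\]
so that, since there are $\varphi^{\ast}(q)$ primitive characters, it suffices to prove $\mathcal{M}_{1}\sim c_{1}(\theta)\,\varphi^{\ast}(q)$ and $\mathcal{M}_{2}\sim c_{2}(\theta)\,\varphi^{\ast}(q)$ and then to check that, after optimisation, $c_{1}(\theta)^{2}/c_{2}(\theta)>0.359$.

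The first moment is routine and imposes no restriction on $\theta$ (for $\theta<1$). Using an unbalanced approximate functional equation for $L(\tfrac12,\chi)$ with principal part of length $q^{\theta}$ and dual part of length $q^{1-\theta}$, expanding $\psi(\chi)$ and applying orthogonality of primitive characters, the congruence from the principal part forces the diagonal $m=\ell$ (as $q^{\theta}<q$), the dual part is $O(\varphi^{\ast}(q)q^{-\theta/2})$ by Möbius cancellation, the imprimitivity corrections $\sum_{d\mid q}\mu(d)\varphi(q/d)(\cdots)$ are lower order, and a contour shift against $1/\zeta(1+s)$ gives $c_{1}(\theta)$ as an explicit linear functional of $P_{1},P_{2}$ that genuinely involves the whole length $q^{\theta}$.

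The second moment is the crux, and the only place smoothness is used. Using the root-number-free functional equation for $|L(\tfrac12,\chi)|^{2}=L(\tfrac12,\chi)L(\tfrac12,\bar\chi)$, one has $|L(\tfrac12,\chi)|^{2}\approx 2\sum_{mn\lesssim q}(mn)^{-1/2}W(mn/q)\chi(m)\overline{\chi(n)}$; expanding $|\psi(\chi)|^{2}$ and applying orthogonality gives
\[
\mathcal{M}_{2}=\sum_{\ell_{1},\ell_{2}\leq q^{\theta}}\frac{a_{\ell_{1}}\overline{a_{\ell_{2}}}}{\sqrt{\ell_{1}\ell_{2}}}\sum_{m,n}\frac{W(mn/q)}{\sqrt{mn}}\sum_{\chi}^{\ast}\chi(m\ell_{1})\,\overline{\chi(n\ell_{2})}.
\]
The diagonal $m\ell_{1}=n\ell_{2}$ contributes $c_{2}^{\mathrm{diag}}(\theta)\,\varphi^{\ast}(q)$, a double contour integral as in Iwaniec--Sarnak and Bui; it receives no contribution from coprime pairs $\ell_{1},\ell_{2}$ with $\ell_{1}\ell_{2}>q$, so once $\theta>\tfrac12$ the off-diagonal $m\ell_{1}-n\ell_{2}=qr$ with $r\ne 0$ is \emph{not} negligible and must be evaluated, not merely bounded. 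Here I would use that $q$, being $q^{\eta}$-smooth, factors as $q=q_{1}q_{2}$ with $q_{1}$ (a product of prime divisors of $q$, each $\leq q^{\eta}$) of any prescribed size up to a factor $q^{\eta}$: after a dyadic decomposition of $m,n$, split the congruence modulo $q_{1}$ and modulo $q_{2}$, apply Poisson summation in one variable to the modulus $q_{1}$, and evaluate the resulting complete exponential sums; the zero frequency yields a secondary main term $c_{2}^{\mathrm{off}}(\theta)\,\varphi^{\ast}(q)$, the nonzero frequencies are Kloosterman-type and are bounded by Weil, and if $\theta$ is to be pushed as far as possible one iterates once in the style of the $q$-van der Corput method for the divisor function in arithmetic progressions to smooth moduli. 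For $\eta$ small enough and $\theta$ close enough to $\tfrac12$ the total error is $O(q^{1-\delta})$, so $c_{2}(\theta)=c_{2}^{\mathrm{diag}}(\theta)+c_{2}^{\mathrm{off}}(\theta)$.

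It remains to optimise. The quantities $c_{1}$ and $c_{2}$ are explicit functionals of $P_{1},P_{2}$ and their derivatives, linear and quadratic respectively, so maximising $c_{1}^{2}/c_{2}$ over polynomials of bounded degree is a finite linear-algebra problem; letting the degrees grow and optimising over the admissible $\theta>\tfrac12$ and over the relative lengths of $\psi_{1},\psi_{2}$ yields a constant exceeding $0.359$ (already a single-piece mollifier with $\theta$ a little above $\tfrac12$ beats the classical $\tfrac13$, and the two-piece refinement does the rest). \textbf{The main obstacle} is exactly the evaluation of the second-moment off-diagonal: one needs an \emph{asymptotic}, with a power-saving error uniform in the shift $r$, for the character sums twisted by $\chi(\ell_{1})\overline{\chi(\ell_{2})}$ with $\ell_{1}\ell_{2}$ somewhat larger than $q$, and it is precisely the freedom---available only because $q$ is $q^{\eta}$-smooth---to peel off a divisor of $q$ of essentially arbitrary size and carry out Poisson summation to that modulus that makes this feasible; the tension between pushing $\theta$ upward and keeping the completed sums in the range where Weil still saves a power of $q$ (which forces $\eta$ to be small) is what pins down the final numerical value.
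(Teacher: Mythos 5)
Your route is genuinely different from the paper's, and it has a gap at exactly the step you yourself flag as the main obstacle. By taking an \emph{untwisted} (Bui-type) mollifier of total length $q^{\theta}$ with $\theta>\tfrac12$, you force yourself to \emph{evaluate asymptotically}, with a power-saving error uniform in the shift, the off-diagonal $m\ell_1\equiv \pm n\ell_2 \Mod q$, $m\ell_1\neq n\ell_2$, for $\ell_1\ell_2$ as large as $q^{2\theta}$ — equivalently, the twisted second moment $\sum_{\chi}^{*}|L(\tfrac12,\chi)|^2\chi(\ell_1)\overline{\chi(\ell_2)}$ with $\ell_1\ell_2$ beyond the $q$-barrier. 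Your sketch (factor $q=q_1q_2$, Poisson modulo $q_1$, Weil on the completed sums, one $q$-van der Corput iteration) is only the opening move of such an argument: you never compute the resulting exponents, you do not show that the admissible $\theta$ exceeds $\tfrac12$ by enough, and you do not compute the secondary main term $c_2^{\mathrm{off}}(\theta)$, which enters the optimisation and can erode the gain. This matters quantitatively: to reach $35.9\%$ you need $\theta$ well beyond $\tfrac12$ (a one-piece mollifier needs $\theta>191/341\approx 0.56$; even with Bui's two-piece refinement, which gives only $34.11\%$ at $\theta=\tfrac12$, you need something like $\theta\gtrsim 0.53$), whereas the known smooth-moduli gains in closely analogous level-of-distribution problems (Irving and Xi for $\tau_2$, $\tau_3$, Wu--Xi's arithmetic exponent pairs) are numerically tiny. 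So the claim ``the total error is $O(q^{1-\delta})$ for $\theta$ close enough to $\tfrac12$, and optimisation yields a constant exceeding $0.359$'' is unsupported, and there is no reason given that this route reaches the stated percentage at all.

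The paper avoids this problem rather than solving it. It keeps both mollifier pieces of length at most $q^{1/2}$ but makes the second piece the Michel--VanderKam one twisted by $\overline{\epsilon_\chi}$, in the unbalanced form of Khan--Mili\'cevi\'c--Ngo, with lengths $q^{\theta_1-\epsilon}$ and $q^{\theta_2-\epsilon}$. Then the two square terms $|M_{IS}|^2$, $|M_{MV}|^2$ are classical (no off-diagonal main terms arise since $\theta_i\leq\tfrac12$), and the only place smoothness is used is the cross term, where one needs merely an \emph{upper bound} $\Sigma^{\flat}_{IS,MV}\ll q^{1-\epsilon}$: opening the root number produces $e(\overline{dm_1m_2n_1}n_2/c)$, Poisson in $n_1$ yields Kloosterman sums, and a bilinear estimate via arithmetic exponent pairs (the $q$-van der Corput $A$/$B$ processes of Wu--Xi) gives the bound under $\tfrac12<\theta_1+\theta_2<\frac{1-\kappa}{1-\kappa+\lambda}$; with $(\kappa,\lambda,\nu)=(\tfrac{52}{243},\tfrac{50}{81},\tfrac{202}{243})$ this allows $\theta_1+\theta_2$ up to $\tfrac{191}{341}$ and hence the proportion $\tfrac{\theta_1+\theta_2}{1+\theta_1+\theta_2}\to\tfrac{191}{532}\approx 0.359$. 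In short: the paper trades your ``evaluate a shifted-divisor off-diagonal beyond level $1$'' problem for a ``bound an incomplete bilinear Kloosterman sum'' problem, which is what the smooth-conductor technology actually delivers; your proposal as written leaves its central analytic step, and the final numerical claim, unestablished.
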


A positive proportion of the non-vanishing of Dirichlet $L$-functions was first established in the seminal work of Balasubramanian and Murty \cite{MR1191737}, and later improved to $1/3$ by Iwaniec and Sarnak \cite{MR1689553}. Michel and VanderKam \cite{MR1743500} also obtained the same proportion $1/3$ by introducing a symmetric \textit{two-piece mollifier}. More recently, Bui \cite{MR2978845} reached $34.11\%$ using a three-piece mollifier. In fact, assuming the generalized Riemann hypothesis, a proportion of one-half is attainable.

Restricting to prime moduli, Khan, Mili\'cevi\'c and Ngo \cite{MR4363789} achieved a proportion $5/13 \approx 38.46\%$ by unbalancing the two-piece mollifier. This improved upon the previous work \cite{MR3582014}, which applies the theory of trace functions (see \cite{MR4033731} for an exposition). Interestingly, assuming the existence of a strongly exceptional character modulo $D$, \v Cech and Matom\"aki \cite{MR4735967} showed that for any prime $p \in [D^{300}, D^{O(1)}],$ we have $L(\frac{1}{2},\chi) \neq 0$ for almost all Dirichlet characters $\chi$ modulo $p.$

Regarding smooth moduli, there have been many recent applications of the \textit{$q$-van der Corput method}, first introduced by Heath-Brown \cite{MR485727} (see also \cite{MR1084186}). The most exciting application is, undoubtedly, Zhang's breakthrough \cite{MR3171761} on the level of distribution of primes in progressions to smooth moduli, 
which leads to bounded gaps between primes (see also \cite{MR3294387} and \cite{stadlmann2023primesarithmeticprogressionsbounded}). On the other hand, Irving \cite{MR3384495} and Xi \cite{MR3826483} enhanced the level the distribution of the divisor functions $\tau_2$ and $\tau_3$ in progressions to smooth moduli, respectively. Likewise, Irving \cite{MR3564622} improved the subconvexity bound for Dirichlet $L$-functions with smooth conductors. Eventually, Wu and Xi \cite{MR4355471} developed a theory of \textit{arithmetic exponent pairs}, further refining both of the aforementioned results of Irving.

To prove Theorem \ref{thm:main}, we apply the standard Cauchy--Schwarz argument using a not necessarily balanced or symmetric two-piece mollifier, which boils down to the evaluation of the mollified first and second moments (see Section \ref{sec:mollifier} for details).
\\~\\
\noindent\textit{Notation.} We use the standard big $O$ and little $o$ notations as well as the Vinogradov notation $\ll,$ where the implied constants depend only on the subscripted parameters. We denote $e(x):=e^{2\pi i x}$ for $x \in \mathbb{R}.$
We write $\sum_{\chi \Mod{q}}^{*}$ for a sum over primitive characters modulo $q,$ and $\sum_{\chi \Mod{q}}^{+}$ for a sum over even primitive characters modulo $q.$ We also denote by $\varphi^*(q)$ and $\varphi^+(q)$ the number of primitive characters and even primitive characters, respectively.

\section{Unbalanced two-piece mollifier} \label{sec:mollifier}

Throughout the paper, given a sufficiently small $\eta>0,$ let $q$ be a large, square-free, $q^{\eta}$-smooth integer. Then, as usual, we first mollify the central values $L(\frac{1}{2},\chi)$ for all primitive characters modulo $q$ using the Dirichlet polynomials $M(s,\chi) \approx L(s,\chi)^{-1},$ so that the Cauchy--Schwarz inequality can be applied efficiently and yields
\begin{align} \label{eq:cs}
\frac{1}{\varphi^{*}(q)}\sideset{}{^*}\sum_{\substack{\chi \Mod{q}\\ L(\frac{1}{2},\chi) \neq 0}} 1 
\geq \dfrac{\left| \frac{1}{\varphi^{*}(q)} \sum_{\chi \Mod{q}}^* L(\frac{1}{2},\chi) M(\chi) \right|^2}{ \frac{1}{\varphi^{*}(q)} \sum_{\chi \Mod{q}}^* |L(\frac{1}{2},\chi)M(\chi) |^2 },
\end{align}
where we write $M(\chi)$ for $M(\frac{1}{2},\chi)$ to simplify notation.

In this paper, we utilize the unbalanced or asymmetric two-piece mollifier introduced by Khan, Mili\'cevi\'c and Ngo \cite[p. 1605]{MR4363789} of the shape
\begin{align} \label{eq:mollifier}
M(\chi;\theta_1,\theta_2):=
\frac{\theta_1}{\theta_1+\theta_2} \sum_{m_1 \leq q^{\theta_1-\epsilon}} \frac{y_{m_1}\chi(m_1)}{\sqrt{m_1}} 
+ \frac{\theta_2}{\theta_1+\theta_2} \times \frac{\overline{\tau(\chi)}}{\sqrt{q}} \sum_{m_2 \leq q^{\theta_2-\epsilon}} \frac{y_{m_2}\overline{\chi(m_2)}}{\sqrt{m_2}}
\end{align}
for some $\theta_1, \theta_2 \in (0,1),$ where we denote the dampened Möbius function by
\begin{align*}
y_{m_i}:=\mu(m_i) \left(1-\frac{\log m_i}{\log q^{\theta_i-\epsilon}} \right)
\end{align*}
for $i=1,2,$ and the Gauss sum by
\begin{align*}
\tau(\chi):=\sum_{a \Mod{q}} \chi(a)e\left(\frac{a}{q} \right).
\end{align*}
For convenience, we denote the root number by $\epsilon_{\chi}:=\tau(\chi)/\sqrt{q}.$ Note that $|\epsilon_{\chi}|=1.$

\begin{remark}
Such a twisted mollifier is suggested by the approximate functional equation at the central point 
\begin{align*}
L\Bigl(\frac{1}{2},\chi \Bigr) \approx \sum_{n_1 \leq N_1} \frac{\chi(n_1)}{\sqrt{n_1}} + \overline{\epsilon_{\chi}} \sum_{n_2 \leq N_2} \frac{\overline{\chi(n_1)}}{\sqrt{n_1}}
\end{align*}
with $N_1 N_2 = q$ (see \cite[Theorem 5.3]{MR2061214} for instance), so that
\begin{align*}
L\Bigl(\frac{1}{2},\chi \Bigr)^{-1} \approx  \sum_{n_1 \leq N_1} \frac{\mu(n_1)\chi(n_1)}{\sqrt{n_1}} + \overline{\epsilon_{\chi}} \sum_{n_2 \leq N_2} \frac{\mu(n_2)\overline{\chi(n_1)}}{\sqrt{n_1}}.
\end{align*}
\end{remark}

To evaluate the mollified first and second moments in (\ref{eq:cs}), even and odd primitive characters have to be treated separately. By symmetry, we only consider the even primitive characters $\chi$ here, i.e., $\chi(-1)=1.$ Note that $\varphi^{+}(q)=\frac{1}{2}\varphi^{*}(q)+O(1).$ 

\begin{proposition}[Mollified first moment] \label{prop:1st}
Let $\theta_1, \theta_2 \in (0,1].$ Then as $q \to \infty,$ we have
\begin{align*}
 \sideset{}{^+}\sum_{\chi \Mod{q}} L \Bigl(\frac{1}{2},\chi \Bigr) M(\chi;\theta_1,\theta_2)
 = (1+o(1)) \varphi^{+}(q).
\end{align*}
\end{proposition}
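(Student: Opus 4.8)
The plan is to expand the mollified first moment into two pieces according to the two-piece structure of $M(\chi;\theta_1,\theta_2)$ in \eqref{eq:mollifier}, and to evaluate each contribution separately using an approximate functional equation for $L(\tfrac12,\chi)$ together with the orthogonality of even primitive characters modulo $q$. First I would insert the approximate functional equation $L(\tfrac12,\chi)\approx\sum_{n\le N_1}\chi(n)n^{-1/2}+\overline{\epsilon_\chi}\sum_{n\le N_2}\overline{\chi(n)}n^{-1/2}$ with $N_1N_2=q$ (for even characters, with the appropriate archimedean weight factors attached). For the first piece of the mollifier, $\frac{\theta_1}{\theta_1+\theta_2}\sum_{m_1\le q^{\theta_1-\epsilon}}y_{m_1}\chi(m_1)m_1^{-1/2}$, the dominant term comes from pairing the $\chi(n)$-sum of the AFE against $\chi(m_1)$ and using $\sum_{\chi}^{+}\chi(n)\overline{\chi(m)}$, which by orthogonality detects $n\equiv\pm m\pmod q$; since $n\le N_1$ and $m_1\le q^{\theta_1-\epsilon}$ are both smaller than $q$, the only surviving congruence is the diagonal $n=m_1$, giving $\varphi^+(q)\sum_{m_1}y_{m_1}/m_1$ plus manageable off-diagonal error. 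By Möbius-type summation the dampened sum $\sum_{m}y_m/m=\sum_m\mu(m)m^{-1}\bigl(1-\tfrac{\log m}{\log q^{\theta_1-\epsilon}}\bigr)$ is asymptotically $1$ (the truncated $\sum\mu(m)/m$ is $o(1)$ and the linear-weight piece, by $\sum_{m\le x}\mu(m)\log m/m\to-1$, contributes $1$), so this piece yields $\frac{\theta_1}{\theta_1+\theta_2}\varphi^+(q)$. The pairing of $\chi(m_1)$ against the $\overline{\chi(n)}$-dual sum carries an extra $\overline{\epsilon_\chi}$ and forces $nm_1\equiv\pm 1$, which has no solutions in the relevant ranges, so it contributes only to the error.

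The second piece $\frac{\theta_2}{\theta_1+\theta_2}\cdot\frac{\overline{\tau(\chi)}}{\sqrt q}\sum_{m_2\le q^{\theta_2-\epsilon}}y_{m_2}\overline{\chi(m_2)}m_2^{-1/2}=\frac{\theta_2}{\theta_1+\theta_2}\overline{\epsilon_\chi}\sum_{m_2}y_{m_2}\overline{\chi(m_2)}m_2^{-1/2}$ is handled symmetrically: here the main term arises from pairing $\overline{\epsilon_\chi}\,\overline{\chi(m_2)}$ against $\overline{\epsilon_\chi}\,\overline{\chi(n)}$ — no, rather against the dual sum $\overline{\epsilon_\chi}\sum_{n\le N_2}\overline{\chi(n)}n^{-1/2}$ would give $|\epsilon_\chi|^2=1$ times $\sum_{\chi}^{+}\overline{\chi(n)}\chi(m_2)$, again a diagonal $n=m_2$; but the cleaner route is to use the functional equation $\epsilon_\chi\overline{L(\tfrac12,\chi)}$-type symmetry, or equivalently to note $\overline{\epsilon_\chi}\cdot\overline{\chi(m_2)}$ paired with the first AFE sum $\sum\chi(n)n^{-1/2}$ produces $\sum_\chi^{+}\overline{\epsilon_\chi}\chi(n)\overline{\chi(m_2)}$ with the Gauss-sum weight, which after opening $\tau(\chi)$ and applying orthogonality again localizes to a diagonal contribution of size $\frac{\theta_2}{\theta_1+\theta_2}\varphi^+(q)$ by the same dampened-Möbius computation. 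Summing the two pieces gives $\bigl(\frac{\theta_1}{\theta_1+\theta_2}+\frac{\theta_2}{\theta_1+\theta_2}\bigr)\varphi^+(q)=\varphi^+(q)$, as claimed.

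The main obstacle is controlling the off-diagonal and error terms uniformly in the smoothness parameter $\eta$. The lengths $N_1,N_2$ in the AFE can be taken balanced ($N_1\asymp N_2\asymp\sqrt q$) or unbalanced, but the mollifier lengths $q^{\theta_i-\epsilon}$ together with $N_i$ must stay below $q$ so that the only congruence solution $n\equiv\pm m\pmod q$ with $nm<q$ is $n=m$; this is automatic once $\theta_i\le 1$ and the AFE is truncated smoothly at scale $\sqrt q$, with the tails absorbed into the smooth cutoff. The genuinely delicate point is the cross term between the two mollifier pieces paired with the "wrong" AFE sum, which produces a shifted-divisor-type sum $\sum_{nm\equiv\pm1\,(q)}$; since $nm\le N\cdot q^{\theta-\epsilon}<q$ in the relevant regime, this sum is \emph{empty}, so in fact no deep input (no Kloosterman sums, no $q$-van der Corput) is needed for the first moment — the smoothness of $q$ plays no essential role here and is only genuinely exploited in the second moment (Proposition to follow). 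I would therefore present this proof by: (i) stating the even-character AFE with explicit weight functions and recording the orthogonality relation $\sum_{\chi\,(q)}^{+}\chi(m)\overline{\chi(n)}=\tfrac12\sum_{d\mid q}\mu(q/d)\bigl(\mathbf 1_{m\equiv n\,(d)}+\mathbf 1_{m\equiv -n\,(d)}\bigr)\varphi(d)$; (ii) substituting into the moment and separating diagonal from off-diagonal; (iii) evaluating the diagonal via $\sum_{m\le x}y_m/m = 1+o(1)$; (iv) bounding the off-diagonal trivially using the length restriction. Each step is routine; the only care needed is bookkeeping the $\epsilon$ in $q^{\theta_i-\epsilon}$ and the archimedean factors, which do not affect the leading constant.
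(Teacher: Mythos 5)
Your overall plan (split $M(\chi;\theta_1,\theta_2)$ into its two pieces and evaluate each first moment via the approximate functional equation and orthogonality) matches the paper, which in fact simply quotes the known evaluations $\sum^{+}_{\chi}L(\tfrac12,\chi)M_{IS}(\chi;\theta_1)=(1+o(1))\varphi^{+}(q)$ from Iwaniec--Sarnak and $\sum^{+}_{\chi}L(\tfrac12,\chi)M_{MV}(\chi;\theta_2)=(1+o(1))\varphi^{+}(q)$ from Michel--VanderKam and adds the weights $\tfrac{\theta_1}{\theta_1+\theta_2}+\tfrac{\theta_2}{\theta_1+\theta_2}=1$. However, your identification of the main term is wrong in two compounding ways. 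First, the orthogonality bookkeeping is backwards: $M_{IS}$ carries $\chi(m_1)$, \emph{not} $\overline{\chi(m_1)}$, so pairing it with the principal AFE sum $\sum_n\chi(n)n^{-1/2}$ produces $\sum^{+}_{\chi}\chi(nm_1)$, which detects the congruence $nm_1\equiv\pm1\pmod q$ — and it is exactly this piece, through the single term $n=m_1=1$ (with $y_1=1$ and the archimedean weight $\approx1$), that yields the main term $(1+o(1))\varphi^{+}(q)$. The pairing with the dual AFE sum is what detects $n\equiv\pm m_1\pmod q$, and it contributes only to the error. You have these two congruence conditions swapped, so the cross term you dismiss as ``empty'' is in fact where the entire main term lives, while your claimed diagonal $n=m_1$ belongs to an error term.

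Second, even granting your diagonal, the evaluation $\sum_{m\le x}y_m/m=1+o(1)$ is false: since $y_m=\mu(m)\bigl(1-\tfrac{\log m}{\log x}\bigr)$ with $x=q^{\theta_1-\epsilon}$, one has
\begin{align*}
\sum_{m\le x}\frac{y_m}{m}=\frac{1}{\log x}\sum_{m\le x}\frac{\mu(m)}{m}\log\frac{x}{m}\ll\frac{1}{\log x}=o(1),
\end{align*}
because when you invoked $\sum_{m\le x}\mu(m)\log m/m\to-1$ you dropped the $1/\log x$ normalization. So the quantity you compute is $o(\varphi^{+}(q))$ and cannot produce the factor $\tfrac{\theta_1}{\theta_1+\theta_2}$; the correct structure is that each \emph{unweighted} moment is $(1+o(1))\varphi^{+}(q)$ and the convex weights sum to $1$. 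A further point: for the $M_{MV}$ piece the residual character sum $\sum^{+}_{\chi}\overline{\epsilon_{\chi}}\chi(n)\overline{\chi(m_2)}$ does not reduce to a plain diagonal; one must open the Gauss sum $\overline{\tau(\chi)}$ (as in Michel--VanderKam, Section 3), which is exactly why the paper cites that computation rather than repeating an IS-type diagonal argument. Your closing observations — that no Kloosterman/$q$-van der Corput input and no smoothness of $q$ is needed for the first moment — are correct, but the main-term mechanism as written would not survive scrutiny and needs to be redone along the lines above.
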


To state the next proposition on the second moment, we first recall the definition of \textit{arithmetic exponent pairs} (see \cite{MR4355471} for further details).

\begin{definition}[Arithmetic exponent pair {\cite[Definition 3.3]{MR4355471}}]
\label{def:pair}
Given a sufficiently small $\eta>0,$ let $q$ be a large, square-free, $q^{\eta}$-smooth number. Also, let $K_q$ be an $\infty$-amiable trace function modulo $q,$\footnote{See \cite[Section 2]{MR4355471} for the definition.} and $W_{\delta}$ be a $\delta$-periodic function. Then we call the triple $(\kappa, \lambda,\nu)$ an \textit{arithmetic exponent pair} if 
\begin{align*}
\mathfrak{S}(K,W;I):=\sum_{n \in I} K_q(n) W_{\delta}(n) \ll_{\mathfrak{c}, \epsilon, \eta} q^{\epsilon} \|W_{\delta}\|_{\infty} 
\left( \frac{q}{|I|} \right)^{\kappa} |I|^{\lambda} \delta^v,
\end{align*}
provided that $|I|<q\delta$.

\end{definition}

\begin{proposition}[Mollified second moment] \label{prop:2nd}
Let $\theta_1, \theta_2 \in (0,1/2]$ satisfying
\begin{align*}
\frac{1}{2}  < \theta_1 + \theta_2 < \frac{1-\kappa}{1-\kappa+\lambda}
\end{align*}
for some arithmetic exponent pair $(\kappa,\lambda, \nu)$ with $0 \leq \nu-\kappa <1.$
Then as $q \to \infty,$ we have
\begin{align*}
 \sideset{}{^+}\sum_{\chi \Mod{q}} \left|L \Bigl(\frac{1}{2},\chi \Bigr)M(\chi;\theta_1,\theta_2) \right|^2 = \left(1+\frac{1}{\theta_1+\theta_2}+o(1) \right) \varphi^+(q).
\end{align*}
\end{proposition}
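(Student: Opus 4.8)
\textbf{Proof proposal for Proposition \ref{prop:2nd}.}

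The plan is to open the square and split the mollified second moment into three pieces according to the two components of $M(\chi;\theta_1,\theta_2)$: the ``diagonal'' term $|M_1|^2$ coming from the first piece, the term $|M_2|^2$ coming from the twisted piece, and the cross term $2\,\Re(M_1\overline{M_2})$. For each piece I would first insert an approximate functional equation for $|L(\frac12,\chi)|^2$ (with a smooth weight), reducing matters to sums of the shape $\sum_{\chi}^{+} \chi(m n_1)\overline{\chi(\ell n_2)}$ over even primitive characters, which by orthogonality collapses to a congruence condition $m n_1 \equiv \pm \ell n_2 \pmod q$ together with a coprimality-to-$q$ condition. The contribution of the ``true diagonal'' $m n_1 = \ell n_2$ is the main term; a standard computation with the dampened Möbius weights $y_{m_i}$ (using $\mu * \log$ identities, Perron/contour shifts, and the fact that $L(s,\chi_0)$ has a simple pole) produces the constant $1$ from $|M_1|^2$, the constant $1/(\theta_1+\theta_2)$ from the balance factors together with the cross term, and no main-term contribution from $|M_2|^2$ beyond what is already counted — this bookkeeping mirrors the Iwaniec--Sarnak and Khan--Mili\'cevi\'c--Ngo computations and is where the precise shape of the answer $(1 + \tfrac{1}{\theta_1+\theta_2})$ emerges. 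The length restrictions $m_i \le q^{\theta_i - \epsilon}$ with $\theta_1,\theta_2 \le 1/2$ guarantee the relevant Dirichlet polynomials are short enough that these main-term manipulations converge.

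The substance of the proof is bounding the off-diagonal terms $m n_1 = \ell n_2 + rq$ with $r \neq 0$. Here I would fix the outer variables and view the inner sum over the long variable (say $n_1$, running up to length roughly $N_1 \asymp \sqrt{q}$) as a sum of the form $\sum_{n_1 \in I} K_q(n_1) W_\delta(n_1)$, where $K_q$ is the trace function attached to the linear-fractional change of variable $n_1 \mapsto (\ell n_2 + rq)/m \pmod{q}$ — concretely a multiplicative-inverse or Kloosterman-type kernel, which is $\infty$-amiable modulo the square-free $q^\eta$-smooth modulus $q$ — and $W_\delta$ is the $\delta$-periodic weight coming from the archimedean cutoff. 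Invoking Definition \ref{def:pair} with a chosen arithmetic exponent pair $(\kappa,\lambda,\nu)$, each such inner sum is $\ll q^\epsilon \|W_\delta\|_\infty (q/|I|)^\kappa |I|^\lambda \delta^\nu$; summing back over the outer variables $m, \ell, n_2, r$ and tracking the total length, the whole off-diagonal is acceptably small precisely when $2(\theta_1+\theta_2) < 2\cdot\frac{1-\kappa}{1-\kappa+\lambda}$, i.e. $\theta_1 + \theta_2 < \frac{1-\kappa}{1-\kappa+\lambda}$ — which is exactly the hypothesis — while the lower bound $\theta_1+\theta_2 > 1/2$ ensures the mollifier is long enough that the Cauchy--Schwarz step in \eqref{eq:cs} is not wasted (and, more technically, that the error terms from the approximate functional equation are dominated). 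The condition $0 \le \nu - \kappa < 1$ is used to control the $\delta$-aspect when summing over $r$.

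I expect the main obstacle to be the uniform treatment of the off-diagonal across all ranges of the parameters $r, m, \ell, n_2$: one must check that the trace function $K_q$ arising after the linear change of variables genuinely falls under the $\infty$-amiable framework of \cite{MR4355471} uniformly in these parameters (in particular that its conductor stays bounded and that degenerate cases, e.g. $m$ or $\ell$ sharing factors with $q$, or $r$ large, are handled separately by trivial or classical bounds), and that the $q^\epsilon$ losses from the exponent-pair estimate, accumulated over the multiple outer summations, are absorbed by the strict inequality in the hypothesis. A secondary technical point is decoupling the two pieces of $|L(\tfrac12,\chi)|^2$ in the twisted cross term $M_1 \overline{M_2}$, where the Gauss sum $\tau(\chi)$ interacts with the approximate functional equation; here one uses $\overline{\tau(\chi)}\tau(\chi) = q$ for primitive $\chi$ and the reciprocity $\tau(\chi)\tau(\overline\chi) = \chi(-1)q$ to reorganize into the same inverse-congruence shape, so no genuinely new input is needed beyond careful accounting. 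Once all off-diagonal contributions are shown to be $o(\varphi^+(q))$, combining with the main term computed in the first step yields the claimed asymptotic.
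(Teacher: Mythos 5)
There is a genuine gap, and it is at the heart of the matter. The decisive piece of the second moment is the cross term $\sideset{}{^+}\sum_{\chi} |L(\frac12,\chi)|^2 M_{IS}(\chi;\theta_1)\overline{M_{MV}(\chi;\theta_2)}$, which carries exactly \emph{one} unpaired root number $\epsilon_\chi=\tau(\chi)/\sqrt{q}$. Your claim that the identities $\tau(\chi)\overline{\tau(\chi)}=q$ and $\tau(\chi)\tau(\overline{\chi})=\chi(-1)q$ allow you to ``reorganize into the same inverse-congruence shape, so no genuinely new input is needed'' is exactly where the argument breaks: there is nothing to pair the single $\epsilon_\chi$ against, so orthogonality does \emph{not} reduce this term to a congruence $mn_1\equiv\pm\ell n_2\pmod{q}$; instead, opening the Gauss sum produces complete exponential sums $e\bigl(\overline{dm_1m_2n_1}\,n_2/c\bigr)$ over the factorizations $q=cd$ (Lemma \ref{lem:2nd}). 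The substance of the paper is precisely the treatment of this term: a dyadic decomposition, Poisson summation in $n_1$ turning it into genuine Kloosterman sums $\operatorname{Kl}_2(\overline{dem_1m_2}hn_2;c)$, and then a \emph{bilinear} estimate for Kloosterman sums (Lemma \ref{lem:bilinear}) built on the arithmetic exponent pairs for smooth moduli, interpolated against a Michel--VanderKam type bound and the Weil bound in the range where $c_2$ is large. It is only this interpolation that produces the threshold $\theta_1+\theta_2<\frac{1-\kappa}{1-\kappa+\lambda}$; your sketch asserts this threshold (``acceptably small precisely when\dots'') without any mechanism capable of generating it, and a congruence-count treatment of the off-diagonal can only recover the classical constraints behind the $1/3$ proportion.

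Two further problems. First, your invocation of Definition \ref{def:pair} is misapplied: you parametrize the off-diagonal by $mn_1=\ell n_2+rq$ and then speak of an inner sum over $n_1$ of a trace function attached to $n_1\mapsto(\ell n_2+rq)/m$, but once $m,\ell,n_2,r$ are fixed that equation determines $n_1$, so there is no such inner sum; moreover, a one-variable (linear) application of the exponent pair is not how the saving is obtained here --- the paper needs the bilinear structure after Poisson and Cauchy--Schwarz, where the hypothesis $0\le\nu-\kappa<1$ is used to sum $(m_1-m_2,q)^{\nu-\kappa+\epsilon}$ over pairs $m_1\neq m_2$ (not ``to control the $\delta$-aspect when summing over $r$''), together with the P\'olya--Vinogradov pair of Lemma \ref{lem:pv} for the complementary range $N>q$. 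Second, the main-term bookkeeping is off: the piece $|M_{MV}|^2$ \emph{does} contribute, namely $(\tfrac{\theta_2}{\theta_1+\theta_2})^2(1+\tfrac{1}{\theta_2})$, while $|M_{IS}|^2$ gives $(\tfrac{\theta_1}{\theta_1+\theta_2})^2(1+\tfrac{1}{\theta_1})$ and the cross term's diagonal ($m_1m_2n_1=1$) gives $\tfrac{2\theta_1\theta_2}{(\theta_1+\theta_2)^2}$; only the sum of all three equals $1+\tfrac{1}{\theta_1+\theta_2}$. That part is repairable bookkeeping, but the missing Poisson/bilinear Kloosterman analysis is not.
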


The essence of our work lies in the mollified second moment, in which an application of the Poisson summation formula transforms the error term into a bilinear form of Kloosterman sums, allowing the $q$-van der Corput AB-processes to be executed more efficiently. Otherwise, one might not be able to surpass Bui's record of $34.11\%$, which holds for general moduli (see Section \ref{sec:mollmom} for details).



\begin{proof}[Proof of Theorem \ref{thm:main} assuming Propositions \ref{prop:1st} and \ref{prop:2nd}]

Applying Proposition \ref{prop:1st} and Proposition \ref{prop:2nd} (similarly for odd primitive characters) to (\ref{eq:cs}), we have
\begin{align*}
\frac{1}{\varphi^{*}(q)}\sideset{}{^*}\sum_{\substack{\chi \Mod{q}\\ L(\frac{1}{2},\chi) \neq 0}} 1
\geq (1-o(1))\frac{\theta_1+\theta_2}{1+\theta_1+\theta_2}
\end{align*}
for those $(\theta_1, \theta_2) \in (0,1/2]^2$ satisfying $\frac{1}{2}<\theta_1+\theta_2< \frac{1-\kappa}{1-\kappa+\lambda}$.
Then, the theorem follows by taking the arithmetic exponent pair $
B A B A^2 B A^3 B \left(0, 1, 0 \right)=
\left( \frac{52}{243}, \frac{50}{81}, \frac{202}{243} \right),$  
where 
\begin{align*}
A(\kappa, \lambda, \nu):= \left( \frac{\kappa}{2(\kappa+1)} , \frac{\kappa+\lambda+1}{2(\kappa+1)}, \frac{\kappa}{\kappa+1} \right) 
\end{align*}
is the A-process and
\begin{align*}
B(\kappa, \lambda, \nu):=\left( \lambda-\frac{1}{2}, \kappa+\frac{1}{2}, \lambda-\kappa+\nu \right)
\end{align*}
is the B-process of the $q$-van der Corput method (see \cite[Section 3]{MR4355471}). 
\end{proof}

Therefore, it remains to prove Propositions \ref{prop:1st} and \ref{prop:2nd}, especially the latter. 

\section{Mollified moments} \label{sec:mollmom}

We begin with the decomposition
\begin{align} \label{eq:decomp}
M(\chi;\theta_1, \theta_2)=\frac{\theta_1}{\theta_1+\theta_2}M_{IS}(\chi;\theta_1)
+\frac{\theta_2}{\theta_1+\theta_2}M_{MV}(\chi;\theta_2),
\end{align}
where
\begin{align*}
M_{IS}(\chi;\theta_1):=\sum_{m_1 \leq q^{\theta_1-\epsilon}} \frac{y_{m_1}\chi(m_1)}{\sqrt{m_1}} 
\end{align*}
and
\begin{align*}
M_{MV}(\chi;\theta_2):=
\overline{\epsilon_{\chi}} \sum_{m_2 \leq q^{\theta_2-\epsilon}} \frac{y_{m_2}\overline{\chi(m_2)}}{\sqrt{m_2}}.
\end{align*}


The computation of the mollified first moment is now considered standard.

\begin{proof}[Proof of Proposition \ref{prop:1st}]
Using the decomposition (\ref{eq:decomp}), it boils down to
\begin{align*}
\sideset{}{^+}\sum_{\chi \Mod{q}} L\Bigl( \frac{1}{2}, \chi \Bigr) M_{IS}(\chi;\theta_1)
\end{align*}
and
\begin{align*}
\sideset{}{^+}\sum_{\chi \Mod{q}} L\Bigl( \frac{1}{2}, \chi \Bigr) M_{MV}(\chi;\theta_2).
\end{align*}
In fact, both the first and the second sum are $(1+o(1))\varphi^+(q)$ (see \cite[p. 947]{MR1689553} and \cite[Section 3]{MR1743500}, respectively). Therefore, the proposition follows.
\end{proof}

To compute the mollified second moment, we first isolate the main term.

\begin{lemma} \label{lem:2nd}
Let $\theta_1, \theta_2 \in (0,1/2].$ Then
\begin{gather*}
\sideset{}{^+}\sum_{\chi \Mod{q}} 
\left|  L\Bigl( \frac{1}{2}, \chi \Bigr) M(\chi;\theta_1, \theta_2) \right|^2
=\left(1+\frac{1}{\theta_1+\theta_2}+o(1) \right) \varphi^+(q)  
+ \Sigma^{\flat}_{IS, MV},
\end{gather*}
where $\Sigma^{\flat}_{IS, MV}$
\begin{align*}
:=
\Re  \frac{2}{\sqrt{q}} 
\sum_{\substack{q=cd\\(c,d)=1}}
 \varphi(c) 
\underset{m_1m_2n_1 \neq 1}{\underset{\substack{m_1 \leq q^{\theta_1-\epsilon}\\ m_2 \leq q^{\theta_2-\epsilon}\\(m_1m_2, q)=1}}{\sum \sum} 
\underset{\substack{n_1 \geq 1\\ n_2 \geq 1 \\(n_1n_2,q)=1 }}{\sum \sum}}
\frac{y_{m_1} y_{m_2} }{\sqrt{m_1 m_2}} 
\frac{1}{\sqrt{n_1n_2}} V \left( \frac{n_1n_2}{q} \right) 
e\left( \frac{\overline{dm_1m_2n_1} n_2}{c} \right)
\end{align*}
for some fixed nonnegative function $V(x) \ll_A (1+x)^{-A}.$
\end{lemma}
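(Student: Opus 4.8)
The plan is to expand the square $|L(\tfrac12,\chi)M(\chi;\theta_1,\theta_2)|^2$ using the three-term decomposition (3.2), apply the approximate functional equation to write $L(\tfrac12,\chi)$ (and its conjugate) as short Dirichlet sums $\sum_n \chi(n)n^{-1/2}V(n/\sqrt q)$ plus a dual sum weighted by $\overline{\epsilon_\chi}$, and then average over even primitive characters using the orthogonality relation. Concretely, the product $L M$ is a linear combination of four pieces according to which mollifier piece ($M_{IS}$ or $M_{MV}$) and which half of the functional equation is used; after multiplying by the conjugate one gets diagonal-type terms (e.g. $M_{IS}\overline{M_{IS}}$ paired against the $\chi$-by-$\overline\chi$ part of $|L|^2$), which contribute the expected main term, and genuine cross terms (e.g. $M_{IS}$ against $M_{MV}$, or the root-number-twisted halves paired with untwisted ones), which after orthogonality collapse onto a congruence condition. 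The key arithmetic input is the orthogonality formula for even primitive characters modulo a square-free $q$: $\sum_{\chi\Mod q}^{+}\chi(a)\overline{\chi(b)}$ is, up to the factor $\tfrac12$ and lower-order terms, a sum over $cd=q$ of $\mu(d)\varphi(c)$ detecting $a\equiv \pm b\Mod c$ together with $(ab,q)=1$ — this is where the divisor decomposition $q=cd$, $(c,d)=1$, the factor $\varphi(c)$, and the Kloosterman/additive-character shape $e(\overline{dm_1m_2n_1}\,n_2/c)$ in $\Sigma^{\flat}_{IS,MV}$ all originate, after opening one of the $\overline\chi$ sums into an additive character via $\overline{\tau(\chi)}$ and reciprocity.

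The steps in order: (i) insert the approximate functional equation into each copy of $L(\tfrac12,\chi)$, so $|L|^2$ becomes a double sum over $n_1,n_2$ with a smooth cutoff $V(n_1n_2/q)$ and, in the twisted pieces, factors of $\epsilon_\chi$ or $\overline{\epsilon_\chi}$; (ii) combine these $\epsilon_\chi$ factors with the $\overline{\epsilon_\chi}$ built into $M_{MV}$ — in each of the four product types the net power of $\epsilon_\chi$ is $0$ or $\pm1$, and when it is $\pm 1$ one writes $\epsilon_\chi^{\pm1}$ in terms of a Gauss sum so that the whole $\chi$-sum is of the form $\sum^{+}_\chi \chi(\cdot)\overline{\chi(\cdot)}$ times an additive character; (iii) apply even-primitive orthogonality, producing the congruence $a\equiv\pm b\Mod c$ for $cd=q$; the "$c=q$'' (equivalently $d=1$) term with the sign "$+$'' and $n_1n_2 = m_1 m_2$-type equality gives the diagonal, whose evaluation is the standard mollified-second-moment main-term computation (a contour shift / Perron-type estimate using the definition of $y_{m_i}$ as a dampened Möbius function), yielding $(1 + \frac{1}{\theta_1+\theta_2}+o(1))\varphi^+(q)$; (iv) collect every remaining term — the $d>1$ terms, and the "$-$'' sign terms, and the off-diagonal $n_1n_2\neq\text{(the matched product)}$ terms — into the error $\Sigma^{\flat}_{IS,MV}$, check that the $IS$–$IS$ and $MV$–$MV$ off-diagonals are negligible or can be folded in, and verify that the genuinely non-trivial surviving piece is exactly the $IS$–$MV$ cross term displayed, with the reality ($\Re$) coming from pairing a term with its complex conjugate (swapping the roles of the two halves of the functional equation).

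The main obstacle — and the reason the lemma is stated as an isolation step rather than an asymptotic — is bookkeeping the twisted pieces in step (ii)–(iii) so that the error term has precisely the clean bilinear Kloosterman shape $\frac{2}{\sqrt q}\sum_{cd=q}\varphi(c)\sum\sum \cdots e(\overline{dm_1m_2n_1}n_2/c)$ with the correct normalisation and the correct constraint $m_1m_2n_1\neq 1$ (which is what removes the diagonal and makes $\Sigma^{\flat}_{IS,MV}$ an honest error term). In particular one must use reciprocity to move the modulus from a Gauss sum modulo $q$ to the additive character modulo $c$, track the cross terms where both the AFE dual sum and $M_{MV}$ contribute root numbers so they cancel rather than accumulate, and confirm that the $IS$–$IS$ diagonal supplies the "$1$'' and the combined $IS$–$MV$/$MV$–$MV$ diagonal the "$\frac{1}{\theta_1+\theta_2}$''. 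Once $\Sigma^{\flat}_{IS,MV}$ is in this form, bounding it is deferred to the subsequent sections via Poisson summation and the $q$-van der Corput $AB$-processes with the stated arithmetic exponent pair; the present lemma claims only the exact identity, so no estimation of $\Sigma^{\flat}_{IS,MV}$ is attempted here.
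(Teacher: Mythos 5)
Your plan follows essentially the same route as the paper: decompose $|M|^2$ via (\ref{eq:decomp}) into the $IS$--$IS$, $MV$--$MV$ and $IS$--$MV$ pieces, evaluate the first two by the standard mollified-second-moment argument (the paper simply cites Bui and uses $|\epsilon_\chi|=1$), and treat the cross term by the approximate functional equation, even-primitive orthogonality, and opening the root number $\epsilon_\chi$ as a Gauss sum to produce the $\sum_{q=cd}\varphi(c)\,e(\overline{dm_1m_2n_1}n_2/c)$ shape, with the $m_1=m_2=n_1=1$ term giving $(1+o(1))\varphi^{+}(q)$ and the rest defining $\Sigma^{\flat}_{IS,MV}$.

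One bookkeeping correction: your final check, ``the $IS$--$IS$ diagonal supplies the $1$ and the combined $IS$--$MV$/$MV$--$MV$ diagonal the $\frac{1}{\theta_1+\theta_2}$,'' is not how the main term assembles. With the weights from (\ref{eq:decomp}), one has $\bigl(\tfrac{\theta_1}{\theta_1+\theta_2}\bigr)^2\bigl(1+\tfrac{1}{\theta_1}\bigr)$ from $IS$--$IS$, $\bigl(\tfrac{\theta_2}{\theta_1+\theta_2}\bigr)^2\bigl(1+\tfrac{1}{\theta_2}\bigr)$ from $MV$--$MV$, and $\tfrac{2\theta_1\theta_2}{(\theta_1+\theta_2)^2}\cdot 1$ from the cross-term diagonal; the ``$1$'' comes from all three pieces combined and the ``$\tfrac{1}{\theta_1+\theta_2}$'' from the $1/\theta_i$ parts of the two square terms. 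Also, what you call ``reciprocity'' is really just the CRT factorization of the opened Gauss sum over $q=cd$ (the $d$-part reducing to a Ramanujan-type factor, the $c$-part giving the additive character mod $c$); neither point affects the validity of the overall approach.
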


\begin{proof}
This is essentially \cite[Section 4]{MR3917919}. For the sake of completeness, we sketch a proof here. Using the decomposition (\ref{eq:decomp}), we have 
\begin{gather*}
|M(\chi;\theta_1, \theta_2)|^2 =\left(\frac{\theta_1}{\theta_1+\theta_2}\right)^2 | M_{IS}(\chi;\theta_1) |^2+ \left(\frac{\theta_2}{\theta_1+\theta_2}\right)^2
| M_{MV}(\chi;\theta_2) |^2 \\
+ \frac{2 \theta_1 \theta_2}{(\theta_1+\theta_2)^2}\Re ( M_{IS}(\chi;\theta_1) \overline{M_{MV}(\chi;\theta_2)} ) . 
\end{gather*}
Arguing as in \cite[Section 2.3]{MR2978845}, we have
\begin{align*}
\Sigma_{IS}:=\sideset{}{^+}\sum_{\chi \Mod{q}} \left| L\Bigl( \frac{1}{2}, \chi \Bigr)  M_{IS}(\chi;\theta_1) \right|^2 = \left(1+\frac{1}{\theta_1}+o(1) \right)\varphi^+(q) .
\end{align*}
Since $|\epsilon_{\chi}|=1,$ we also have
\begin{align*}
\Sigma_{MV}:=\sideset{}{^+}\sum_{\chi \Mod{q}} \left| L\Bigl( \frac{1}{2}, \chi \Bigr)  M_{MV}(\chi;\theta_2) \right|^2 = \left(1+\frac{1}{\theta_2}+o(1) \right)\varphi^+(q) .
\end{align*}
Therefore, it remains to evaluate
\begin{align*}
\Sigma_{IS, MV}:=\sideset{}{^+}\sum_{\chi \Mod{q}} \left| L\Bigl( \frac{1}{2}, \chi \Bigr) \right|^2 M_{IS}(\chi;\theta_1) \overline{M_{MV}(\chi;\theta_2)}.
\end{align*}

A standard argument using the smoothed approximate functional equation for $L(s,\chi)$ at $s=\frac{1}{2}$ (see \cite[Section 6.1]{MR1743500} and \cite[Section 4]{MR3917919} for details) yields
\begin{align*}
\Sigma_{IS, MV}
=2 \underset{\substack{m_1 \leq q^{\theta_1-\epsilon}\\ m_2 \leq q^{\theta_2-\epsilon}\\(m_1m_2, q)=1}}{\sum \sum}
\frac{y_{m_1} y_{m_2} }{\sqrt{m_1 m_2}} 
\underset{\substack{n_1 \geq 1 \\ n_2 \geq 1\\ (n_1n_2,q)=1 }}{\sum \sum}
\frac{1}{\sqrt{n_1n_2}} V \left( \frac{n_1n_2}{q} \right)
\sideset{}{^+}\sum_{\chi \Mod{q}} \epsilon_{\chi} \chi(m_1m_2n_1) \overline{\chi(n_2)}
\end{align*}
for some fixed nonnegative function $V(x) \ll_A (1+x)^{-A}.$ By the almost orthogonal relation
\begin{align*}
\sideset{}{^+}\sum_{\chi \Mod{q}} \chi(m) \overline{\chi(n)}
=\frac{1}{2} \sum_{\substack{q=cd\\c | m \pm n}} \mu(d) \varphi(c)
\end{align*}
whenever $(mn,q)=1$ (see \cite[Lemma 4.1]{MR2782664} for instance), one can show that 
\begin{align*}
\sideset{}{^+}\sum_{\chi \Mod{q}}   \epsilon_{\chi}  \chi(m_1m_2n_1) \overline{\chi(n_2)} 
=\Re \,  \frac{1}{\sqrt{q}} \sum_{\substack{q=cd\\(c,d)=1}}  \varphi(c) 
e\left( \frac{\overline{dm_1m_2n_1} n_2}{c} \right)
\end{align*}
by opening the root number $\epsilon_{\chi}$. The main term of $\Sigma_{IS, MV}$ arises from $m_1=m_2=n_1=1,$ which is 
\begin{align*}
\Sigma_{IS, MV}^{\sharp}:=
2 \sideset{}{^+}\sum_{\chi \Mod{q}} \epsilon_{\chi} \sum_{n \geq 1} \frac{\overline{\chi(n)}}{\sqrt{n}}
V\left( \frac{n}{q} \right).
\end{align*}
Then, one can show by using the approximate functional equation (see \cite[Section 4]{MR3917919} for details) that 
\begin{align*}
\Sigma_{IS, MV}^{\sharp}=(1+o(1))\varphi^+(q),
\end{align*}
and the lemma follows
\end{proof}

We also require a general bilinear estimate of Kloosterman sums (trace functions), which is essentially a weighted and also twisted version of \cite[Lemma 2.3]{MR3826483}, to bound the error term transformed by the Poisson summation formula. 

\begin{lemma} \label{lem:bilinear}
Given three positive integers $M, N, \delta$ and two sequences of complex numbers $\boldsymbol{\alpha}=(\alpha_m)_{M<m \leq 2M}, \boldsymbol{\beta}=(\beta_n)_{N<n \leq 2N},$ let $K_q$ be an $\infty$-amiable trace function modulo $q,$ and $W_{\delta}$ be a $\delta$-periodic function with $\|W_{\delta}\|_{\infty} \ll_{\epsilon} {\delta}^{\epsilon}.$ Suppose $(\kappa, \lambda, \nu)$ is an arithmetic exponent pair with $0 \leq \nu-\kappa <1$. Then
\begin{gather*}
\mathfrak{B}(\boldsymbol{\alpha},\boldsymbol{\beta};K,W):=\sum_{\substack{M < m \leq 2M\\(m,q)=1}} \sum_{\substack{N < n \leq 2N\\(n,q)=1}} \alpha_m \beta_n  K_q(\overline{m}n) W_{\delta} (\overline{m}n) \\
\ll_{\mathfrak{c}, \epsilon, \eta}
(q M \delta)^{\epsilon} M^{\frac{1}{2}} N^{\frac{1}{2}} \|\boldsymbol{\alpha}\|_2 \|\boldsymbol{\beta}\|_2
\left( q^{\frac{\kappa}{2}} N^{-\frac{1-\lambda+\kappa}{2}}  \delta^{\frac{\nu}{2}}
+q^{-\frac{1}{4}} + M^{-\frac{1}{2}}
\right).
\end{gather*}

\end{lemma}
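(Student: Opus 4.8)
The estimate is the weighted and twisted analogue of \cite[Lemma 2.3]{MR3826483}, so the plan is the familiar Cauchy--Schwarz-and-completion scheme: apply Cauchy--Schwarz in $n$ to square out the $m$-variable, then split into a diagonal $m_1=m_2$ (which will produce the term $M^{-\frac12}$) and an off-diagonal, whose inner sum over $n$ is recognised as a sum of the type governed by an arithmetic exponent pair (producing the first term), with a completion of the $n$-sum handling the remaining range (producing the term $q^{-\frac14}$).

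First I would write $\mathfrak{B}=\sum_{n}\beta_n\bigl(\sum_{m}\alpha_m K_q(\overline{m}n)W_\delta(\overline{m}n)\bigr)$ and apply Cauchy--Schwarz to the outer sum, obtaining
\begin{align*}
\left|\mathfrak{B}(\boldsymbol{\alpha},\boldsymbol{\beta};K,W)\right|^2
\leq \|\boldsymbol{\beta}\|_2^2
\sum_{\substack{M<m_1,m_2\leq 2M\\(m_1m_2,q)=1}}\alpha_{m_1}\overline{\alpha_{m_2}}
\sum_{\substack{N<n\leq 2N\\(n,q)=1}}K_q(\overline{m_1}n)\overline{K_q(\overline{m_2}n)}\,W_\delta(\overline{m_1}n)\overline{W_\delta(\overline{m_2}n)} .
\end{align*}
The diagonal $m_1=m_2$ contributes $\ll\|\boldsymbol{\alpha}\|_2^2\|\boldsymbol{\beta}\|_2^2\,N\,(qM\delta)^{\epsilon}$, using the trivial bounds $\|K_q\|_\infty\ll_{\mathfrak{c},\epsilon}q^\epsilon$ (valid for the trace functions in play since $q$ is squarefree with few prime factors) and $\|W_\delta\|_\infty\ll_\epsilon\delta^\epsilon$; after taking square roots and dividing by $M^{\frac12}N^{\frac12}\|\boldsymbol{\alpha}\|_2\|\boldsymbol{\beta}\|_2$ this is precisely the term $M^{-\frac12}$.

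For the off-diagonal, fix $m_1\neq m_2$ and, after removing $(n,q)=1$ by M\"obius inversion over the (at most $q^\epsilon$) divisors of $q$, view the inner sum as $\mathfrak{S}(K'',W'';(N,2N])$ with $K''_q(x):=K_q(\overline{m_1}x)\overline{K_q(\overline{m_2}x)}$ and $W''_\delta(x):=W_\delta(\overline{m_1}x)\overline{W_\delta(\overline{m_2}x)}$. Here $K''_q$ is a trace function of conductor $\ll_{\mathfrak{c}}1$ (multiplicative translation preserving the conductor) which, once its trivial component has been peeled off at the boundedly many primes $p\mid q$ with $m_1\equiv m_2\pmod p$ --- each such splitting merely lowering the modulus --- is $\infty$-amiable, while $W''_\delta$ is $\delta$-periodic with $\|W''_\delta\|_\infty\ll_\epsilon\delta^\epsilon$. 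Provided $N<q\delta$, the defining inequality of the exponent pair $(\kappa,\lambda,\nu)$ (Definition \ref{def:pair}) gives $\mathfrak{S}(K'',W'';(N,2N])\ll_{\mathfrak{c},\epsilon,\eta}(q\delta)^\epsilon(q/N)^\kappa N^\lambda\delta^\nu$; summing over $m_1,m_2$ via $\sum_m|\alpha_m|\leq M^{\frac12}\|\boldsymbol{\alpha}\|_2$, taking square roots, and writing $(q/N)^{\kappa/2}N^{\lambda/2}=N^{\frac12}\,q^{\kappa/2}N^{-(1-\lambda+\kappa)/2}$ yields the first term. In the complementary range $N\geq q\delta$, where Definition \ref{def:pair} does not apply directly, I would instead complete the $n$-summation over full periods modulo $q$ and invoke Deligne's square-root cancellation for the resulting complete sums of the trace function $K''_q$ (the $\delta$-periodic weight being opened into additive characters modulo $\delta$), which yields the term $q^{-\frac14}$. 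Since the right-hand side of the lemma is a sum of three terms, it is enough to supply whichever bound is relevant in each range.

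The main obstacle is the algebraic-geometric input just used: one must know that for $m_1\neq m_2$ the product $K_q(\overline{m_1}\cdot)\overline{K_q(\overline{m_2}\cdot)}$ is, after removal of its trivial part (governed by the congruences $m_1\equiv m_2$ modulo the primes dividing $q$), an $\infty$-amiable trace function of conductor bounded uniformly in $m_1,m_2$, and that amiability persists through the $q$-van der Corput A- and B-processes underlying the exponent pair estimate, so that Definition \ref{def:pair} genuinely applies to the product sheaf. This is exactly the trace-function machinery of Wu--Xi \cite{MR4355471} (already present, for prime moduli, in \cite{MR3826483}); granting it, the remainder --- the M\"obius removal of the coprimality conditions, the completion step, the passage to the uniform conductor bound, and the tracking of the $q^\epsilon$ and $\delta^\epsilon$ factors --- is routine and follows the template of \cite[Lemma 2.3]{MR3826483}.
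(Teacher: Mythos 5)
Your overall strategy is the paper's: Cauchy--Schwarz in $n$, a diagonal term giving $M^{-1/2}$, the arithmetic exponent pair for the off-diagonal in the moderate-$N$ range, and completion (the paper quotes the P\'olya--Vinogradov pair $(\tfrac12,\tfrac12,1)$ of Lemma \ref{lem:pv} instead of completing by hand, a cosmetic difference) for large $N$. However, there is a concrete gap in your off-diagonal estimate. At the primes $p \mid d := (m_1-m_2,q)$ the product is $|K_d(\overline{m_1}n)|^2$, and this factor cannot simply be ``peeled off'': it has no cancellation (its mean over a period is $\asymp 1$, not $\asymp d^{-1/2}$), so it is not a discardable component of an amiable trace function. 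It must instead be absorbed into the periodic weight, which then has period $d\delta$ while the trace function lives modulo $q/d$; the defining inequality of the exponent pair therefore gives
\begin{align*}
\sum_{N<n\leq 2N} K_q(\overline{m_1}n)\overline{K_q(\overline{m_2}n)}\,W_{\delta}(\overline{m_1}n)\overline{W_{\delta}(\overline{m_2}n)}
\ll_{\mathfrak{c},\epsilon,\eta} (dN\delta)^{\epsilon}\Bigl(\frac{q}{dN}\Bigr)^{\kappa} N^{\lambda} (d\delta)^{\nu}
= (dN\delta)^{\epsilon}\Bigl(\frac{q}{N}\Bigr)^{\kappa} N^{\lambda} \delta^{\nu}\, d^{\nu-\kappa},
\end{align*}
not the $d$-independent bound $(q/N)^{\kappa}N^{\lambda}\delta^{\nu}$ you wrote. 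Since $d$ can be as large as $\min(2M,q)$ and $\nu-\kappa$ is typically close to $1$ (it equals $150/243$ for the pair used in the paper), the factor $d^{\nu-\kappa}$ is far from negligible, and summing trivially over $m_1,m_2$ via $\sum_m|\alpha_m|\leq M^{1/2}\|\boldsymbol{\alpha}\|_2$ is not justified.

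The missing step is the gcd-weighted double sum
$\sum_{m_1\neq m_2}|\alpha_{m_1}||\alpha_{m_2}|(m_1-m_2,q)^{\nu-\kappa+\epsilon}\ll_{\epsilon} q^{\epsilon} M\|\boldsymbol{\alpha}\|_2^2$,
which holds precisely because $0\leq \nu-\kappa<1$ (pairs with a large common divisor $d\mid m_1-m_2$ are correspondingly rare). Tellingly, your write-up never invokes this hypothesis, even though it appears in the statement of the lemma exactly for this purpose. The same issue, with exponent $\tfrac12$ in place of $\nu-\kappa$, occurs in your completion range, where the correct bound is $\ll (qdN\delta)^{\epsilon} N\sqrt{d/q}$ and one needs $\sum_{m_1\neq m_2}|\alpha_{m_1}||\alpha_{m_2}|(m_1-m_2,q)^{1/2+\epsilon}\ll q^{\epsilon}M\|\boldsymbol{\alpha}\|_2^2$. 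Once these $d$-dependences are reinstated and controlled by the gcd argument, your proof matches the paper's.
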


Our proof builds on a generalization of the classical Pólya–Vinogradov inequality.

\begin{lemma}[Pólya–Vinogradov inequality {\cite[Proposition 3.4]{MR4355471}}] \label{lem:pv}
The triple $(\frac{1}{2}, \frac{1}{2}, 1)$ is an arithmetic exponent pair (see Definition \ref{def:pair}).
\end{lemma}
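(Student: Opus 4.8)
The goal is to show that for every interval $I$ with $|I|<q\delta$ one has
\[
\mathfrak{S}(K,W;I)=\sum_{n\in I}K_q(n)W_\delta(n)\ll_{\mathfrak{c},\epsilon,\eta}q^{\epsilon}\|W_\delta\|_\infty\,q^{1/2}\delta,
\]
which is precisely the inequality of Definition \ref{def:pair} for the triple $(\frac{1}{2},\frac{1}{2},1)$, since $(q/|I|)^{1/2}|I|^{1/2}\delta=q^{1/2}\delta$; in particular the bound does not involve $|I|$, the signature of a completion (P\'olya--Vinogradov) estimate. The plan is exactly the completion method: break $I$ into residue classes modulo $\delta$, reduce each one to an incomplete sum of an amiable trace function modulo $q$, and apply the P\'olya--Vinogradov inequality for trace functions. (Observe that $(\frac{1}{2},\frac{1}{2},1)=B(0,1,0)$, where $B$ is the $B$-process appearing in the proof of Theorem \ref{thm:main}, so the lemma can also be read as the $B$-process applied to the trivial exponent pair $(0,1,0)$ --- for which $\mathfrak{S}(K,W;I)\ll_{\mathfrak{c},\epsilon}q^{\epsilon}\|W_\delta\|_\infty|I|$ is immediate from $\|K_q\|_\infty\ll_{\mathfrak{c},\epsilon}q^{\epsilon}$ --- but a direct argument is cleanest.)

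We may restrict to $(\delta,q)=1$, which is the only case needed for Lemma \ref{lem:bilinear} (there $\delta$ arises from Poisson summation over a modulus coprime to $q$). Splitting according to the residue of $n$ modulo $\delta$,
\[
\mathfrak{S}(K,W;I)=\sum_{r\Mod{\delta}}W_\delta(r)\,S_r,\qquad S_r:=\sum_{\substack{n\in I\\ n\equiv r\Mod{\delta}}}K_q(n),
\]
and the substitution $n=\delta m+r$ turns $S_r$ into $\sum_{m\in M_r}\widetilde K_r(m)$, where $\widetilde K_r(m):=K_q(\delta m+r)$ and $M_r$ is an interval with $|M_r|\leq|I|/\delta+1\leq q$; this is the only point at which the hypothesis $|I|<q\delta$ enters. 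Since $(\delta,q)=1$, the map $m\mapsto\delta m+r$ is an invertible affine transformation of $\mathbb{Z}/q\mathbb{Z}$, and amiability is stable --- with a controlled change of conductor --- under such transformations \cite[\S2]{MR4355471}, so $\widetilde K_r$ is again an $\infty$-amiable trace function modulo $q$.

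It therefore suffices to prove the P\'olya--Vinogradov bound $\sum_{m\in M}\widetilde K(m)\ll_{\mathfrak{c},\epsilon}q^{1/2+\epsilon}$ for any $\infty$-amiable trace function $\widetilde K$ modulo the square-free number $q$ and any interval $M$ with $|M|\leq q$. Set $\widehat{\widetilde K}(h):=\sum_{a\Mod{q}}\widetilde K(a)e(ah/q)$; by the defining properties of amiable trace functions \cite[\S2]{MR4355471} --- at each prime divisor of $q$ one is dealing with a Fourier sheaf having no trivial geometric constituent --- the Weil bound yields $|\widehat{\widetilde K}(h)|\ll_{\mathfrak{c},\epsilon}q^{1/2+\epsilon}$ uniformly in $h$, the crucial case $h=0$ included (multiplicativity over the square-free $q$ costs only $q^{\epsilon}$, as $q$ has $O(\log q)$ prime factors). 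Completing the sum modulo $q$,
\[
\sum_{m\in M}\widetilde K(m)=\frac1q\sum_{h\Mod{q}}\widehat{\widetilde K}(h)\sum_{m\in M}e(-mh/q),
\]
and combining this with the elementary bound $\sum_{h\Mod{q}}\bigl|\sum_{m\in M}e(-mh/q)\bigr|\ll q\log q$ gives the claim. Hence $|S_r|\ll_{\mathfrak{c},\epsilon}q^{1/2+\epsilon}$ for every $r$, and summing trivially over the $\delta$ residue classes,
\[
|\mathfrak{S}(K,W;I)|\leq\|W_\delta\|_\infty\sum_{r\Mod{\delta}}|S_r|\ll_{\mathfrak{c},\epsilon}q^{1/2+\epsilon}\,\delta\,\|W_\delta\|_\infty,
\]
which is the assertion.

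Everything above is routine once the input on trace functions is in hand, so the only genuinely load-bearing steps are the two facts borrowed from \cite[\S2]{MR4355471}: that amiability, with a controlled conductor, survives the substitution $m\mapsto\delta m+r$, and that the complete sums $\widehat{\widetilde K}(h)$ exhibit square-root cancellation \emph{uniformly} in $h$, including at the trivial frequency $h=0$. The latter is where ``$\infty$-amiable'' really matters: for an arbitrary bounded $q$-periodic weight the $h=0$ term would only return the trivial bound $|M|$, whereas for an amiable trace function the \'etale-cohomological input behind \cite{MR4355471} forces cancellation there as well, and this is exactly what upgrades the trivial exponent pair $(0,1,0)$ to the P\'olya--Vinogradov pair $(\frac{1}{2},\frac{1}{2},1)$.
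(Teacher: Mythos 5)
The paper contains no internal proof of this lemma: it is quoted verbatim from Wu--Xi \cite[Proposition 3.4]{MR4355471}, so the only thing to compare your argument with is the argument behind that citation, and yours is essentially it --- the completion (P\'olya--Vinogradov) method, equivalently the $B$-process applied to the trivial pair $(0,1,0)$, exactly as you observe. The skeleton is sound: split $I$ into residue classes modulo $\delta$, pull back along $m\mapsto \delta m+r$ (an invertible affine map once $(\delta,q)=1$), complete the resulting sum modulo $q$, and use square-root cancellation of every Fourier coefficient of the trace function; summing the per-class bound $q^{1/2+\epsilon}$ over the $\delta$ classes gives $q^{1/2+\epsilon}\delta\|W_\delta\|_\infty$, which is the $(\frac12,\frac12,1)$ bound. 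The two load-bearing inputs you flag --- stability of amiability, with controlled conductor, under invertible affine substitutions, and the uniform bound $\widehat{\widetilde K}(h)\ll_{\mathfrak c,\epsilon}q^{1/2+\epsilon}$ --- are precisely what the amiability formalism of \cite[Section 2]{MR4355471} supplies, and since the present paper black-boxes that formalism anyway (see the footnote to Definition \ref{def:pair}), deferring to it there is legitimate.

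Two points of precision. First, your restriction to $(\delta,q)=1$ is not merely a convenience justified by the application: some such hypothesis is in fact necessary for the statement as transcribed in Definition \ref{def:pair}, since with $\delta=q$, $W_\delta=\overline{K_q}$ and $|I|\asymp q\delta$ the left-hand side is $\asymp q^{2}$ while the claimed bound is $q^{3/2+\epsilon}$; the coprimality of the period with the modulus is part of the framework of \cite{MR4355471} and does hold in the only place the lemma is used (inside Lemma \ref{lem:bilinear}, where the period $d\delta$ is coprime to the modulus $q/d$ because $q$ is square-free and $(\delta,q)=1$ in the application). So your reading is the correct one rather than a weakening. Second, the uniformity in $h$ of the Fourier bound requires the absence of \emph{all} Artin--Schreier constituents (the Fourier-sheaf condition built into amiability), not only of the trivial constituent: a constituent $e(an/q)$ with $a\neq 0$ would equally destroy the P\'olya--Vinogradov bound, so your closing emphasis on $h=0$ slightly understates what amiability is used for, though nothing you assert is false. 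With these remarks recorded, the proposal is a correct and adequately detailed substitute for the citation.
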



\begin{proof}[Proof of Lemma \ref{lem:bilinear}]
For readers unfamiliar with the language of trace functions, it suffices, for the proof of Proposition \ref{prop:2nd}, to assume that $K_q(x) = \operatorname{Kl}_2(ax; q)$ for some $(a, q) = 1,$ where
\begin{align*}
\operatorname{Kl}_2(b;q):=\frac{1}{\sqrt{q}} \sum_{\substack{u \Mod{q}\\(u,q)=1}} e \left( \frac{bu+\overline{u}}{q} \right)
\end{align*}
is the normalized Kloosterman sum.

Applying the Cauchy--Schwarz inequality, we have
\begin{gather}
|\mathfrak{B}(\boldsymbol{\alpha},\boldsymbol{\beta};K,W)|^2 \leq 
\|\boldsymbol{\beta}\|_2^2 \sum_{n} \left| \sum_m \alpha_m K_q(\overline{m}n)W_{\delta}(\overline{m}n) \right|^2 \nonumber \\
\leq  \|\boldsymbol{\beta}\|_2^2 \sum_{m_1} \sum_{m_2} |\alpha_{m_1}| |\alpha_{m_2}|
\left| \sum_{n}  K_q(\overline{m_1}n) \overline{K_q(\overline{m_2}n)} W_{\delta}(\overline{m_1}n) \overline{W_{\delta}(\overline{m_2}n)} \right|. \label{eq:babkw}
\end{gather}
For $m_1=m_2,$ by assumption, the contribution to the right-hand side of \eqref{eq:babkw} is
\begin{align} \label{eq:n1=n2}
\ll_{\epsilon} (q\delta)^{\epsilon} N \|\boldsymbol{\alpha}\|_2^2 \|\boldsymbol{\beta}\|_2^2.
\end{align}
For $m_1 \neq m_2,$ let $d=(m_1-m_2,q).$ Since $q$ is square-free, we have $(d,q/d)=1.$ Then the Chinese remainder theorem yields 
\begin{align*}
K_{q}(\overline{m_1}n) \overline{K_q(\overline{m_2}n)} = |K_{d} (\overline{m_1}n) |^2 K_{q/d}(\overline{m_1}n) \overline{K_{q/d}(\overline{m_2}n)}.
\end{align*}

On one hand, suppose $N \leq q.$ Then by the definition of arithmetic exponent pairs, we have
\begin{align*}
\sum_{n}  K_q(\overline{m_1}n) \overline{K_q(\overline{m_2}n)} W_{\delta}(\overline{m_1}n) \overline{W_{\delta}(\overline{m_2}n)}
\ll_{\mathfrak{c}, \epsilon, \eta} (d N \delta)^{\epsilon} \left( \frac{q}{d N} \right)^{\kappa} 
M^{\lambda} (d\delta)^{\nu},
\end{align*}
so that the contribution to the right-hand side of \eqref{eq:babkw} is
\begin{gather}
\ll_{\mathfrak{c}, \epsilon, \eta}
(N\delta)^{\epsilon}  q^{\kappa} N^{\lambda-\kappa} \delta^{\nu} \|\boldsymbol{\alpha}\|_2^2
\underset{m_1 \neq m_2}{\sum \sum} |\alpha_{m_1}| |\alpha_{m_2}| (m_1-m_2,q)^{\nu-\kappa+\epsilon} \nonumber\\
\ll_{\mathfrak{c}, \epsilon, \eta} 
(N\delta)^{\epsilon} q^{\kappa} N^{\lambda-\kappa}  \delta^{\nu} M \|\boldsymbol{\alpha}\|_2^2 \|\boldsymbol{\beta}\|_2^2
\label{eq:B1}
\end{gather}
by the assumption that $0 \leq \nu-\kappa<1.$

On the other hand, suppose $N>q.$ Then applying Lemma \ref{lem:pv} that
\begin{align*}
\mathfrak{S}(K,W;I):=\sum_{n \in I} K_q(n) W_{\delta}(n) \ll_{\mathfrak{c}, \epsilon} (q|I|)^{\epsilon} \| W_{\delta} \|_{\infty} \sqrt{q} \delta,
\end{align*}
 we obtain
 \begin{align*}
 \sum_{n}  K_q(\overline{m_1}n) \overline{K_q(\overline{m_2}n)} W_{\delta}(\overline{m_1}n) \overline{W_{\delta}(\overline{m_2}n)}
\ll_{\mathfrak{c}, \epsilon}
 \left( q d N  \delta \right)^{\epsilon} N \sqrt{d/q},
 \end{align*}
 so that the contribution to the right-hand side of \eqref{eq:babkw} is
\begin{gather}
\ll_{\mathfrak{c}, \epsilon} 
(q N \delta)^{\epsilon} q^{-1/2} N \|\boldsymbol{\beta}\|_2^2 
\underset{m_1 \neq m_2}{\sum \sum} |\alpha_{m_1}| |\alpha_{m_2}| (m_1-m_2,q)^{1/2+\epsilon} \nonumber \\
\ll_{\mathfrak{c}, \epsilon} 
(q N \delta)^{\epsilon} q^{-1/2} M N \|\boldsymbol{\alpha}\|_2^2 \|\boldsymbol{\beta}\|_2^2. \label{eq:B2}
\end{gather}

Therefore, the lemma follows from (\ref{eq:n1=n2}), (\ref{eq:B1}) and (\ref{eq:B2}).
\end{proof}

We are now ready to finish the proof of Proposition \ref{prop:2nd}.

\begin{proof}[Proof of Proposition \ref{prop:2nd}] 
Invoking Lemma \ref{lem:2nd}, it suffices to show that $\Sigma^{\flat}_{IS, MV}\ll_{\epsilon} q^{1-\epsilon}.$ 
Let us introduce a smooth dyadic partition of unity. Let $\Phi : [0, \infty) \to [0, \infty)$ be a fixed smooth function with compact support contained in $[1, 2]$ such that
\begin{align*}
\sum_{j \in \mathbb{Z}} \Phi \left( \frac{x}{2^j} \right) = 1.
\end{align*}
Then, we can write
\begin{align*}
\Sigma^{\flat}_{IS, MV}
&=\underset{\substack{M_1, M_2, N_1, N_2 \text{ dyadic} \\ M_1M_2N_1>1}}{\sum \cdots \sum} \Sigma^{\flat}_{IS, MV}(M_1, M_2, N_1, N_2),
\end{align*}
where $\Sigma^{\flat}_{IS, MV}(M_1, M_2, N_1, N_2)$
\begin{gather} 
:=
\Re  \frac{2}{\sqrt{q}} 
\sum_{\substack{q=cd\\(c,d)=1}}
 \varphi(c) 
\underset{\substack{M_1< m_1 \leq \min\{2M_1, q^{\theta_1-\epsilon}\}\\ M_2 < m_2 \leq \min\{2M_2,q^{\theta_2-\epsilon} \}\\(m_1m_2, q)=1}}{\sum \sum} \frac{y_{m_1} y_{m_2} }{\sqrt{m_1 m_2}}
 \nonumber \\
 \underset{\substack{n_1 \geq 1\\ n_2 \geq 1 \\(n_1n_2,q)=1 }}{\sum \sum}
\frac{1}{\sqrt{n_1n_2}} 
\Phi \left( \frac{n_1}{N_1} \right) \Phi \left( \frac{n_2}{N_2} \right)
V \left( \frac{n_1n_2}{q} \right) 
e\left( \frac{\overline{dm_1m_2n_1} n_2}{c} \right). \label{eq:m1m2n1n2}
\end{gather}

On one hand, arguing as in \cite[Section 6.2]{MR1743500}, one can show that
\begin{align} \label{eq:1stcond}
\Sigma^{\flat}_{IS, MV}(M_1, M_2, N_1, N_2) \ll_{\epsilon} q^{1+\epsilon} \sqrt{\frac{M^{*}_1M^{*}_2N_1}{qN_2}},
\end{align}
where $M^{*}_i:=\min \{ 2M_i, q^{\theta_i-\epsilon} \}$ for $i=1,2.$

On the other hand, suppose $N_1 N_2 > q^{1+\epsilon}$ for some $\epsilon>0.$ Then by the definition of $\Phi$ and $V,$ we obtain
\begin{align*}
\Sigma^{\flat}_{IS, MV}(M_1, M_2, N_1, N_2) \ll_A
q^{\epsilon} \sqrt{qM_1^* M_2^* N_1 N_2} \left( \frac{N_1N_2}{q} \right)^{-A} 
\ll_{\epsilon} q^{-100}.
\end{align*}
Therefore, from now on, we always assume $N_1N_2 \leq q^{1+\epsilon}$ for any $\epsilon>0.$ 
Using the Möbius inversion formula followed by separating $n_1$ into residue classes modulo $c$, we have
\begin{gather*}
\sum_{\substack{n_1 \geq 1\\(n_1,q)=1}}
\frac{1}{\sqrt{n_1}} \Phi \left( \frac{n_1}{N_1} \right) V \left( \frac{n_1n_2}{q} \right)
e\left( \frac{\overline{dm_1m_2n_1} n_2}{c} \right) \\
=
\sum_{e | d} \mu(e) 
\sum_{\substack{n_1 \geq 1\\ e | n_1 \\ (n_1,c)=1}}
\frac{1}{\sqrt{n_1}} \Phi \left( \frac{n_1}{N_1} \right) V \left( \frac{n_1n_2}{q} \right)
e\left( \frac{\overline{dm_1m_2n_1} n_2}{c} \right) \\
= \sum_{e | d} \mu(e) 
\sum_{\substack{a \Mod{c}\\(a,c)=1}} e\left( \frac{\overline{dm_1m_2a} n_2}{c} \right)
\sum_{\substack{n_1 \geq 1\\ e | n_1 \\n_1 \equiv a \Mod{c}}}
\frac{1}{\sqrt{n_1}}\Phi \left( \frac{n_1}{N_1} \right) V \left( \frac{n_1n_2}{q} \right).
\end{gather*}
Letting $n_1=en_1'$ followed by the Poisson summation formula, this becomes
\begin{gather}  
\sum_{e | d} \frac{\mu(e)}{\sqrt{e}}
\sum_{\substack{a \Mod{c}\\(a,c)=1}} e\left( \frac{\overline{dm_1m_2a} n_2}{c} \right)
\sum_{\substack{n_1' \geq 1\\n_1' \equiv a\overline{e} \Mod{c}}}
\frac{1}{\sqrt{n_1'}} \Phi \left( \frac{en_1'}{N_1} \right) V \left( \frac{en_1'n_2}{q} \right) \nonumber \\
= \frac{\sqrt{N_1}}{c} \sum_{e | d} \frac{\mu(e)}{e}
\sum_{\substack{a \Mod{c}\\(a,c)=1}} e\left( \frac{\overline{dm_1m_2a} n_2}{c} \right)
 \sum_{h \in \mathbb{Z}} F(h) e \left( \frac{a\overline{e}h}{c} \right) \nonumber \\
 = \sqrt{\frac{N_1}{c}}\sum_{e | d} \frac{\mu(e)}{e} 
 \sum_{h \in \mathbb{Z}} F(h) \operatorname{Kl_2}(\overline{dem_1m_2}hn_2;c), \label{eq:n1sum} 
\end{gather}
where 
\begin{align*}
F(h):=\int_{-\infty}^{\infty} \frac{1}{\sqrt{x}}
\Phi(x) V \left( \frac{N_1n_2 x}{q} \right) e \left( -\frac{N_1hx}{ce} \right) dx
\end{align*}
is the Fourier transform, and 
\begin{align*}
\operatorname{Kl}_2(a;c):=\frac{1}{\sqrt{c}}\sum_{\substack{x \Mod{c}\\(x,c)=1}} e \left( \frac{ax+\overline{x}}{c} \right)
\end{align*}
is the normalized Kloosterman sum. 
Note that the interchange of summations is justified since 
$F(h) \ll_A \min\{1, (N_1h/ce)^{-A}\}$ for any $A>0,$ by repeated integration by parts. Moreover, using Weil's bound 
\begin{align} \label{eq:weil}
\operatorname{Kl}_2(a;c) \leq \tau(c)  \sqrt{(a,c)},
\end{align}
one can show that the contribution of $|h|>q^{\epsilon}\frac{ce}{N_1}$ to \eqref{eq:m1m2n1n2} is $\ll_{\epsilon} q^{-100}.$ Also, let $c_1=c/(h,c)$ and $c_2=(h,c).$ Then it follows from the Chinese remainder theorem that
\begin{align*}
\operatorname{Kl_2}(\overline{dem_1m_2}hn_2;c)=
\operatorname{Kl_2}(\overline{c_2^2 d e m_1 m_2}hn_2;c_1)
\operatorname{Kl_2}(\overline{c_1^2 d e m_1 m_2}hn_2;c_2).
\end{align*}
Therefore, inserting \eqref{eq:n1sum} into \eqref{eq:m1m2n1n2} yields
$\Sigma^{\flat}_{IS, MV}(M_1, M_2, N_1, N_2) $
\begin{gather*}
= \Re \, 2 \sqrt{\frac{N_1}{q}}
\sum_{\substack{q=c_1c_2d\\(c_1c_2,d)=1}}
 \frac{\varphi(c_1c_2)}{\sqrt{c_1c_2}}
 \sum_{e | d} \frac{\mu(e)}{e}
 \sum_{\substack{n_2 \geq 1\\(n_2,q)=1}}
 \frac{1}{\sqrt{n_2}}  \Phi \left( \frac{n_2}{N_2} \right)
 \sum_{\substack{|h| \leq q^{\epsilon}\frac{c_1c_2e}{N_1}\\c_2 | h, \, (h,c_1)=1}} F(h) \\
\underset{\substack{M_1< m_1 \leq \min\{2M_1, q^{\theta_1-\epsilon}\}\\ M_2 < m_2 \leq \min\{2M_2,q^{\theta_2-\epsilon} \}\\(m_1m_2, q)=1}}{\sum \sum}
\frac{y_{m_1} y_{m_2} }{\sqrt{m_1 m_2}}\operatorname{Kl_2}(\overline{c_2^2 d e m_1 m_2}hn_2;c_1)
\operatorname{Kl_2}(\overline{c_1^2 d e m_1 m_2}hn_2;c_2)+O_{\epsilon}(q^{-100}).
\end{gather*}
For $h=0,$ the contribution to $\Sigma^{\flat}_{IS, MV}(M_1, M_2, N_1, N_2)$ is
\begin{gather} 
\ll q^{\epsilon}\left( \frac{M_1^*M_2^*N_1}{qN_2} \right)^{\frac{1}{2}}
\sum_{\substack{q=c_1c_2d\\(c_1c_2,d)=1}} (c_1c_2)^{\frac{1}{2}} |\operatorname{Kl}_2(0;c_1)| |\operatorname{Kl}_2(0;c_2)|
\sum_{e | d} \frac{1}{e} \nonumber \\
\ll_{\epsilon} q^{\epsilon} \left(\frac{M_1^*M_2^*N_1}{qN_2}\right)^{\frac{1}{2}}
\label{eq:ramanujan}
\end{gather}
since the Kloosterman sums are reduced to Ramanujan sums. Therefore, this is negligible and it remains to consider $h \neq 0.$

For $c_2> q^{\theta_1+\theta_2-\frac{1}{2}+10\epsilon},$ it follows form Weil's bound (\ref{eq:weil}) that the contribution to $\Sigma^{\flat}_{IS, MV}(M_1, M_2, N_1, N_2) $ is
\begin{gather*}
\ll_{\epsilon} q^{\epsilon}\left( \frac{M_1M_2N_2}{qN_1} \right)^{\frac{1}{2}} \sum_{\substack{q=c_2c_2d\\(c_1c_2,d)=1}} c_1^{\frac{3}{2}} c_2^{\frac{1}{2}} 
\ll_{\epsilon} q^{1+\epsilon} M_1^* M_2^* q^{-\frac{1}{2}} c_2^{-1},
\end{gather*}
which is negligible.

For $c_2 \leq q^{\theta_1+\theta_2-\frac{1}{2}+10\epsilon},$ since $c_1$ is now $c_1^{O(\eta)}$-smooth and $(h,c_1)=1,$ Lemma \ref{lem:bilinear} is applicable and we have
\begin{gather*}
 \sum_{\substack{n_2 \geq 1\\(n_2,q)=1}}
 \frac{1}{\sqrt{n_2}}  \Phi \left( \frac{n_2}{N_2} \right)
 \sum_{\substack{|h| \leq q^{\epsilon}\frac{c_1c_2e}{N_1}\\c_1 | h, \, (h,c_2)=1}} F(h)
\underset{\substack{M_1< m_1 \leq \min\{2M_1, q^{\theta_1-\epsilon}\}\\ M_2 < m_2 \leq \min\{2M_2,q^{\theta_2-\epsilon} \}\\(m_1m_2, q)=1}}{\sum \sum}
\frac{y_{m_1} y_{m_2} }{\sqrt{m_1 m_2}} \\
\operatorname{Kl_2}(\overline{c_2^2 d e m_1 m_2}hn_2;c_1) 
\operatorname{Kl_2}(\overline{c_1^2 d e m_1 m_2}hn_2;c_2) \\
\ll_{\epsilon, \eta} 
q^{\epsilon} \frac{c_1e}{N_1} \sqrt{M_1^* M_2^*N_2} 
\left( c_1^{\frac{\kappa}{2}} \left( \frac{c_1e}{N_1} N_2 \right)^{-\frac{1-\lambda+\kappa}{2}} c_2^{\frac{\nu}{2}}
+c_1^{-\frac{1}{4}} +(M_1^* M_2^*)^{-\frac{1}{2}}
\right),
\end{gather*}
so that the contribution to $\Sigma^{\flat}_{IS, MV}(M_1, M_2, N_1, N_2) $ is
\begin{gather} 
\ll_{\epsilon, \eta} 
q^{\epsilon} \sqrt{\frac{M_1^* M_2^* N_2}{qN_1}}
\sum_{\substack{q=c_1c_2d\\c_1>q^{?}}} c_1^{\frac{3}{2}} c_2^{\frac{1}{2}}
\left(  c_1^{\frac{\kappa}{2}} \left( \frac{c_1}{N_1} N_2 \right)^{-\frac{1-\lambda+\kappa}{2}} c_2^{\frac{\nu}{2}}
+c_1^{-\frac{1}{4}} +(M_1^* M_2^*)^{-\frac{1}{2}} \right) \nonumber\\
\ll_{\epsilon, \eta} 
q^{\epsilon} \sqrt{\frac{M_1^* M_2^* N_2}{qN_1}}
\left(q^{1+\frac{\lambda}{2}} N_1^{\frac{1-\lambda+\kappa}{2}} N_2^{-\frac{1-\lambda+\kappa}{2}}+ q^{\frac{5}{4}} + q^{\frac{3}{2}} (M_1^* M_2^*)^{-\frac{1}{2}} \right), \nonumber
\end{gather}
which is
\begin{gather}
\ll_{\epsilon, \eta} 
q^{1+\epsilon}
\left( q^{-\frac{1-\lambda}{2}} (M_1^* M_2^*)^{\frac{1}{2}} \left( \frac{N_1}{N_2} \right)^{-\frac{\lambda-\kappa}{2}} +q^{-\frac{1}{4}} \left( \frac{M_1^* M_2^* N_2}{N_1} \right)^{\frac{1}{2}} +  \left( \frac{N_1}{N_2} \right)^{-\frac{1}{2}} \right). \label{eq:dualbound}
\end{gather}

Finally, combining (\ref{eq:1stcond}), (\ref{eq:ramanujan}) and (\ref{eq:dualbound}), we conclude that
\begin{gather*}
\Sigma^{\flat}_{IS, MV}(M_1, M_2, N_1, N_2) \ll_{\epsilon,\eta} q^{1+\epsilon}
\min \left\{ \left( \frac{M_1^* M_2^* N_1}{qN_2} \right)^{\frac{1}{2}}, 
\right.
\\
\left. 
 q^{-\frac{1-\lambda}{2}} (M_1^* M_2^*)^{\frac{1}{2}} \left( \frac{N_1}{N_2} \right)^{-\frac{\lambda-\kappa}{2}}
 +q^{-\frac{1}{4}} \left( \frac{M_1^* M_2^* N_2}{N_1} \right)^{\frac{1}{2}} +  \left( \frac{N_1}{N_2} \right)^{-\frac{1}{2}} 
\right\}.
\end{gather*}
Since by assumption $\theta_1+\theta_2<\frac{1-\kappa}{1-\kappa+\lambda},$ this is $\ll_{\epsilon,\eta} q^{1-\epsilon}$ and the proposition follows.
\end{proof}

\section{Limitation}

Unfortunately, our method cannot break the $40\%$-barrier, even under the assumption of square-root cancellation for short sums of trace functions (see Conjecture \ref{conj} below). It is tempting to apply the Poisson summation formula once more in the variable $n_2$. The resulting hyper-Kloosterman sums, however, it seems, do not offer further savings. Nevertheless, it is certainly plausible to surpass $36\%$ ($40\%$ conditionally) by incorporating more pieces into a necessarily unbalanced two-piece mollifier (see \cite{MR2978845} and \cite{čech2025optimalitymollifiers}).

\begin{conjecture}[Arithmetic exponent pair hypothesis] \label{conj}
We have $(0,1/2,0)$ as an arithmetic exponent pair, i.e., given a sufficiently small $\eta>0,$ let $q$ be a large, square-free, $q^{\eta}$-smooth number. Also, let $K_q$ be an $\infty$-amiable trace function modulo $q$ and $W_{\delta}$ be a $\delta$-periodic function. Then
\begin{align*}
\mathfrak{S}(K,W;I):=\sum_{n \in I} K_q(n)W_{\delta}(n) \ll_{\mathfrak{c}, \epsilon, \eta} (q\delta)^{\epsilon}|I|^{\frac{1}{2}+\epsilon},
\end{align*}
provided that $|I|<q\delta.$\footnote{See \cite[p. 59]{MR1297543} for the classical exponent pair hypothesis.}
\end{conjecture}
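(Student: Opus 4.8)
The statement is the exact trace-function analogue, over $q^{\eta}$-smooth moduli, of the classical exponent pair hypothesis \cite[p.~59]{MR1297543}, so the natural line of attack is the $q$-van der Corput method of Wu and Xi \cite{MR4355471}, pushed to its limit. The plan is to start from the trivial pair $(0,1,0)$ (and its B-image, the Pólya–Vinogradov pair $(\tfrac12,\tfrac12,1)$ of Lemma \ref{lem:pv}) and iterate the A- and B-processes of the proof of Theorem \ref{thm:main}, using $q^{\eta}$-smoothness to factor $q=q_1\cdots q_r$ into pieces of comparable size and splitting $K_q$ and $W_\delta$ into independent local factors by the Chinese remainder theorem, exactly as in the proof of Lemma \ref{lem:bilinear}. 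On one ``reservoir'' factor $q_1$ one completes the incomplete sum $\mathfrak{S}(K,W;I)$ into a complete sum of length $q_1$ of the discrete Fourier transform $\widehat{K}_{q_1}$; this is again an $\infty$-amiable trace function of bounded conductor, so Deligne's bounds supply square-root cancellation at the cost of $\sqrt{q_1}/|I|$ together with the usual $q^{\epsilon}$ boundary losses, the $\delta$-periodic weight being separated and carried through a $\delta^{\nu}$ factor as in Lemma \ref{lem:bilinear}. On the remaining factors one alternates the B-process (completion against additive characters, realizing the transformation $B$) with the A-process (van der Corput differencing followed by Cauchy–Schwarz, reducing $\mathfrak{S}$ to autocorrelation sums $\sum_n K_q(n)\overline{K_q(n+r)}W_\delta(n)\overline{W_\delta(n+r)}$, whose summands are for $r\not\equiv 0$ trace functions of conductor bounded uniformly in $r$, realizing the transformation $A$), and one optimizes the word in $A$ and $B$.

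The technical steps, in order, would be: (i) check that every completion step preserves $\infty$-amiability and keeps the conductor $\mathfrak{c}$ bounded, so Deligne applies with an implied constant depending only on $\mathfrak{c}$; (ii) bound the ``diagonal'' contribution $r\equiv 0$ arising after each A-process --- the analogue of the $\ll N\|\boldsymbol\alpha\|_2^2\|\boldsymbol\beta\|_2^2$ term \eqref{eq:n1=n2} of Lemma \ref{lem:bilinear} --- and verify it never dominates; (iii) propagate the $\delta$-aspect through every step so that the final exponent of $\delta$ equals $\nu$; and (iv) determine the closure of $\{(0,1,0),(\tfrac12,\tfrac12,1)\}$ under $A$ and $B$, reading off the infimum of the exponents $\kappa$ and $\lambda$ so produced.

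The main obstacle is step (iv), and it is not merely technical: the family of triples reachable by iterating $A$ and $B$ is the trace-function counterpart of the classical van der Corput exponent pairs, and that family does \emph{not} contain $(0,\tfrac12,0)$ --- its closure approaches, but never meets, the segment $\kappa=0,\ \lambda=\tfrac12$. This is precisely the wall that leaves the classical exponent pair hypothesis open \cite[p.~59]{MR1297543}: van der Corput alone extracts no cancellation from sums of length below $q^{1/2}$ without an external source of savings. Closing the gap here would therefore require a genuinely new ingredient --- for instance nontrivial estimates for sums of products of \emph{many} trace functions, or for higher-order autocorrelation sheaves, in the spirit of the bilinear and sum--product bounds for trace functions (cf.\ \cite{MR4033731,MR3826483}) --- playing the role that a Burgess- or Vinogradov--Korobov-type improvement plays over the integers. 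It is the absence of such an ingredient, rather than any gap in the van der Corput bookkeeping above, that makes the statement a conjecture.
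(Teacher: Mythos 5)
This statement is a conjecture in the paper: the author gives no proof of it, motivates it only by analogy with the classical exponent pair hypothesis (the footnote to \cite[p.~59]{MR1297543}), and uses it purely conditionally in Section 4 to indicate that $40\%$ would follow. There is therefore no proof of the paper to compare your attempt against, and you have handled this correctly: you do not claim a proof, you lay out the natural $q$-van der Corput attack in the framework of \cite{MR4355471}, and you correctly identify that the obstruction is not bookkeeping but the fact that the triples generated from $(0,1,0)$ (note $A$ fixes this pair, and $B$ sends it to the P\'olya--Vinogradov pair of Lemma \ref{lem:pv}) under the $A$- and $B$-processes never include $(0,\tfrac12,0)$ --- the same wall that keeps the classical exponent pair hypothesis open. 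This matches the paper's own framing, including the Limitation section, where the author concedes that even this hypothesis would not break the $40\%$ barrier by the present method. One small correction to your step (iv): the $A/B$ closure does not ``approach'' the segment $\kappa=0$, $\lambda=\tfrac12$; classically the infimum of $\kappa+\lambda$ over van der Corput pairs is bounded away from $\tfrac12$ (Rankin's optimization gives roughly $0.829$), so the gap is even larger than you suggest --- which only reinforces your conclusion that a genuinely new input (e.g.\ bilinear or sum--product estimates for trace functions beyond \cite{MR4033731,MR3826483}) would be needed, and that the statement must remain a conjecture here.
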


Assuming the hypothesis. Then, it follows from Proposition \ref{prop:2nd} that one can take any $\theta_1, \theta_2 \in (0,1/2]$ satisfying $\frac{1}{2}<\theta_1 + \theta_2 < \frac{2}{3},$ so that the proportion of non-vanishing is
\begin{align*}
\frac{1}{\varphi^{*}(q)}\sideset{}{^*}\sum_{\substack{\chi \Mod{q}\\ L(\frac{1}{2},\chi) \neq 0}} 1\geq (1-o(1))\frac{\theta_1+\theta_2}{1+\theta_1+\theta_2} \geq \frac{2}{5}-o(1).
\end{align*}
Therefore, it is interesting to see whether the $40\%$ or even the $50\%$-barrier can be overcome by averaging over smooth moduli.\footnote{Unfortunately, errors related to Bui's mollifier have been identified in the published work of Pratt \cite{MR3917919}, which will be corrected in due course (see \cite{pratt} for the notice).} 

\section*{Acknowledgements}
The author is grateful to Andrew Granville for his advice and encouragement. He would also like to thank 
Ping Xi and Fei Xue for helpful discussions, as well as Martin \v Cech, Kaisa Matom\"aki 
and the anonymous referee 
for their valuable comments.

\printbibliography
\end{document}